\documentclass[reqno]{amsart}

\usepackage{graphicx}
\usepackage{amscd}
\usepackage{amsmath, amssymb}
\usepackage{graphics}
\usepackage{graphicx}
\usepackage{epsfig}
\usepackage{xcolor}
\usepackage{soul}
\usepackage{tikz}

\newtheorem{theorem}{Theorem}

\newtheorem{corollary}[theorem]{Corollary}

\newtheorem{definition}[theorem]{Definition}

\newtheorem{lemma}[theorem]{Lemma}

\newtheorem{proposition}[theorem]{Proposition}
\newtheorem{remark}[theorem]{Remark}

\newtheorem{question}[theorem]{Question}

\begin{document}

\title[]{On representations of the triplet group and some of its extensions}

\author{Mohamad N. Nasser}

\address{Mohamad N. Nasser\\
         Department of Mathematics and Computer Science\\
         Beirut Arab University\\
         P.O. Box 11-5020, Beirut, Lebanon}
         
\email{m.nasser@bau.edu.lb}
\author{Nafaa Chbili}

\address{Nafaa Chbili\\
         Department of Mathematical Sciences\\
         United Arab Emirates University\\
         15551, Al Ain, UAE}
         
\email{nafaachbili@uaeu.ac.ae}

\author{Khaled Qazaqzeh}

\address{Khaled Qazaqzeh\\
         Department of Mathematics\\
         Yarmouk University\\
         Irbid, Jordan}
   
\email{qazaqzeh@yu.edu.jo}
\maketitle
 
\begin{abstract}
In this paper, we study the representations of the triplet group $L_n$, where $n$ is a positive integer, together with their extensions to the virtual and welded triplet groups $VL_n$ and $WL_n$, respectively. We first introduce $L_n$, its extensions, and its pure subgroup. We then investigate several representations, proving the irreducibility of the classical Tits representation $\Theta: L_n \longrightarrow\mathrm{GL}_{n-1}(\mathbb{C})$ over the complex field $\mathbb{C}$ and constructing a new representation $\mu: L_n \longrightarrow \mathrm{GL}_{n}(\mathbb{Z}[t^{\pm 1}])$, where $t$ is an indeterminate. For the representation $\mu$, we completely study its faithfulness and irreducibility. We also classify all complex homogeneous $2$-local representations of $L_n$ for $n \ge 3$ and all complex non-homogeneous $2$-local representations of $L_3$, establishing connections with the complex specialization of the representation $\mu$. Finally, we examine extensions of $L_n$ representations to $VL_n$ and $WL_n$, proving their existence, classifying non-trivial complex homogeneous $2$-local representations, and analyzing their faithfulness and irreducibility. The paper concludes with an open question concerning further extensions of representations of $L_n$ to $VL_n$ and $WL_n$.
\end{abstract}

\renewcommand{\thefootnote}{}
\footnote{\textit{Keywords and Phrases.} Triplet Group; Virtual Triplet Group; Welded Triplet Group; Irreducible Representations; Faithful Representations.}
\footnote{\textit{Mathematics Subject Classification.} 20F36.}

\vspace*{0.2cm}

\section{Introduction}
Braid-like groups occupy a central position in the interaction between algebra, geometry, and topology, with strong ties to knot theory, configuration spaces, mapping class groups, and representation theory. Since Artin’s introduction of the braid group, many generalizations have been developed, driven by both geometric motivations and the need to encode new equivalence relations on braids and links. Consequently, the study of their algebraic structure and mutual relationships has become an active area of research.
\vspace{0.1cm}

The braid group $B_n$ is an abstract group generated by $\sigma_1,\sigma_2,\ldots,\sigma_{n-1}$ \cite{EArtin1926,EArtin1947}. The study of linear representations of $B_n$ and its extensions, especially questions concerning faithfulness and irreducibility, has been the subject of extensive study. Recall that a group is called linear if it admits a faithful representation into a general linear group. For many years, it was unclear whether the braid group $B_n$ admits a faithful representation. An early attempt used the Burau representation \cite{WBurau1936}, which is faithful for $n \le 3$ \cite{JBirman1974} but was later shown to be unfaithful for $n \ge 5$ \cite{Moo, Dlong1992, SBigelow1999}, while the case $n = 4$ remains open. This problem was eventually resolved by the Lawrence-Krammer-Bigelow representation \cite{RLaw1990}, which was proved to be faithful for all $n \ge 2$ \cite{SBigelow2000, DKram2002}. Hence, the braid group $B_n$ is linear. The most important subgroup of $B_n$ is the pure braid group, denoted by $P_n$, which is the normal subgroup defined as the kernel of the natural homomorphism $B_n \longrightarrow S_n$, where $S_n$ is the symmetric group on $n$ elements, given by $\sigma_i \longmapsto (i\ \ i+1)$ for $1\leq i\leq n-1$. Regarding extensions of $B_n$, L. Kauffman introduced the virtual braid group, denoted by $VB_n$, which is an important group in the study of virtual knot theory \cite{LKauff1999, LKauff2004}.

\vspace{0.1cm}

Coxeter groups form another fundamental class of groups in algebra and geometric group theory. A Coxeter group $C$ with generators $c_1,c_2,\ldots,c_r$ is defined by the presentation
\[
C=\langle c_1,c_2,\ldots,c_r \mid c_i^2=1,\ (c_ic_j)^{m_{ij}}=1,\ 1\leq i,j\leq r\rangle,
\]
where $m_{ij}=m_{ji}\geq 2$ for $i\neq j$. These relations reflect both the combinatorial and geometric structure of the group. A classical example is the symmetric group on $n$ elements $S_n$, which admits a Coxeter presentation with generators $\alpha_1,\alpha_2,\ldots,\alpha_{n-1}$ and relations
\begin{enumerate}
\item $\alpha_i^2=1$ for $1\leq i\leq n-1$,
\item $\alpha_i\alpha_{i+1}\alpha_i=\alpha_{i+1}\alpha_i\alpha_{i+1}$ for $1\leq i\leq n-2$,
\item $\alpha_i\alpha_j=\alpha_j\alpha_i$ for $|i-j|\geq 2$.
\end{enumerate}
\vspace{0.1cm}

Another Coxeter group of interest is the triplet group, denoted by $L_n$, which is generated by the elements $\ell_1,\ell_2,\ldots,\ell_{n-1}$ in Figure \ref{tripletgenerator}, subject to the relations in Figures  \ref{tripletrelation1} and  \ref{tripletrelation2}. This group 
was introduced by M.~Khovanov in his study of $K(\pi,1)$ subspace arrangements \cite{MKhovanov1996}. The triplet group admits a geometric interpretation through certain topological objects called noodles. Fix a codimension-one foliation on the $2$-sphere, allowing possibly for singular points. A noodle is defined as a finite collection of closed curves on the sphere satisfying the following conditions: no two intersection points lie on the same leaf of the foliation, no four curves intersect at a single point, and no intersection point occurs at a singular point of the foliation. The pure triplet group, denoted by $PL_n$, is defined analogously to the pure braid group, as the kernel of the natural homomorphism $L_n \longrightarrow S_n$ given by $\ell_i \longmapsto (i\ \ i+1)$ for $1\leq i\leq n-1$ \cite{TNaik2021}. It was shown in the same reference that, for $n\geq 4$, $PL_n$ is a non-abelian free group of finite rank. In \cite{PKumar2024}, P. Kumar, T. Naik, N. Nanda, and M. Singh introduced the virtual triplet group, denoted by $VL_n$, defined in analogy with the virtual braid group, and provided several presentations of this group. \vspace{0.1cm}

Representations of the triplet group $L_n$, as well as of its extensions and subgroups, play a central role in understanding its algebraic structure. Among these, the Tits representation \cite{Jtits1961} is one of the most fundamental; it was proved faithful by J.~Tits, thereby establishing the linearity of all Coxeter groups. Despite this, relatively few representations of $L_n$ have been studied, and little attention has been given to their generalizations or to the investigation of its subgroups via representation theory. Motivated by this idea, the aim of this paper is to investigate the algebraic structure of $L_n$ through the study of its extensions and subgroups, and to construct and analyze a broader family of representations together with their properties. \vspace{0.1cm}

The paper is structured in three main sections. The first section introduces the triplet group $L_n$, its extensions, and its subgroups. In Section 2, we explore various representations of $L_n$ and their properties. We first prove the irreducibility of the Tits representation $\Theta: L_n \longrightarrow \mathrm{GL}_{n-1}(\mathbb{C}) $
(Theorem \ref{Tits irr}), then we construct a new representation $
\mu : L_n \longrightarrow \mathrm{GL}_{n}(\mathbb{Z}[t^{\pm 1}]) $ (Theorem \ref{defmuu}) studying its faithfulness and irreducibility (Theorems \ref{Thmfa} and \ref{Thmir}). We also classify all complex homogeneous $2$-local representations of $L_n$ for $n\ge 3$ (Theorem \ref{TH13}) and relate them to the complex specialization of the representation $\mu$, along with a classification of all complex non-homogeneous $2$-local representations of $L_3$ (Theorem \ref{Thhh222}).  The final section examines the extension of $L_n$ representations to the virtual and welded triplet groups, $VL_n$ and $WL_n$, demonstrating their existence (Proposition \ref{prop11}), classifying all non-trivial complex homogeneous $2$-local representations for $n \ge 3$ (Theorem \ref{Them111}), and investigating their faithfulness and irreducibility (Theorems \ref{Thmfa111} and \ref{irrtheee}). The paper concludes with an open question on further extensions.

\vspace*{0.1cm}

\section{The Triplet Group and Some of Its Extensions}

We begin by introducing the fundamental definitions and presentations of the braid group, the pure braid group, the virtual braid group, and the welded braid group as the subsequent constructions will be formulated in direct analogy with these groups.

\begin{definition} \cite{EArtin1926, EArtin1947}
The braid group on $n$ strands, denoted by $B_n$, is the group generated by the elements
\[
\sigma_1, \sigma_2, \ldots, \sigma_{n-1},
\]
subject to the relations
\begin{align}
\sigma_{i+1}\sigma_i\sigma_{i+1}
    &= \sigma_i\sigma_{i+1}\sigma_i,
    && i=1,2,\ldots,n-2, \label{eqs1} \\
\sigma_i\sigma_j
    &= \sigma_j\sigma_i,
    && |i-j|\ge 2. \label{eqs2}
\end{align}
\end{definition}

The generator $\sigma_i$ and its inverse are often visualized as in
Figure~\ref{braidgenerator}, while the relation
$\sigma_{i+1}\sigma_i\sigma_{i+1}=\sigma_i\sigma_{i+1}\sigma_i$
is illustrated in Figure~\ref{braidrelation}.


\begin{figure}[h]
	\centering
\includegraphics[width=4.5cm,height=2cm]{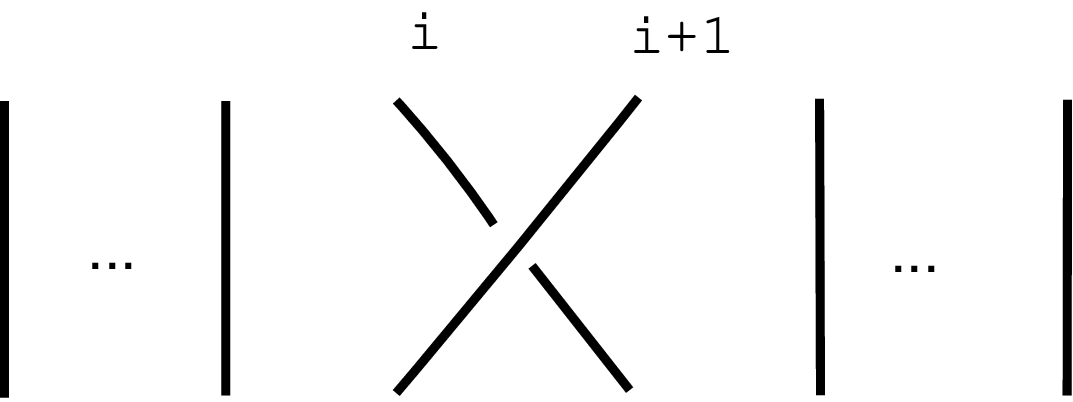}\hspace{2cm} \includegraphics[width=4.5cm,height=2cm]{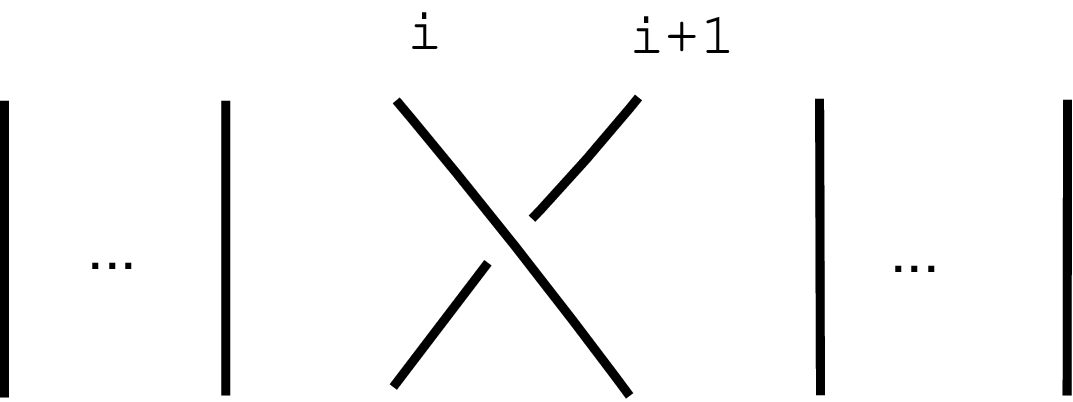}
\caption{The braid group generator $\sigma_i$ and its inverse  $\sigma_i^{-1}$.}
\label{braidgenerator}
\end{figure}

\begin{figure}[h]
	\centering
\includegraphics[width=10cm,height=2.2cm]{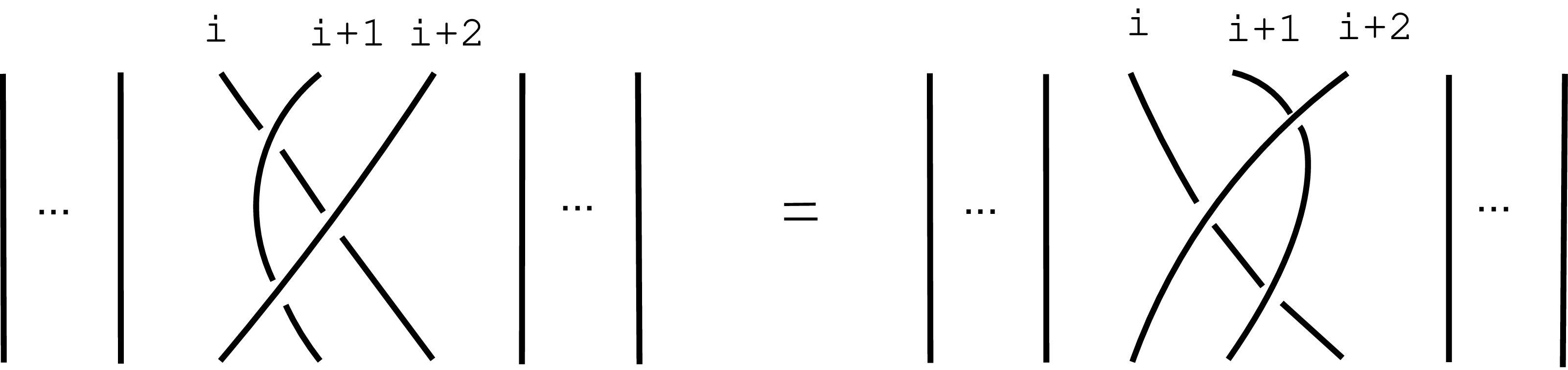}
	\caption{The relation  $\sigma_{i}\sigma_{i+1}\sigma_{i}=\sigma_{+1}\sigma_{i}\sigma_{i+1}$.}
\label{braidrelation}
\end{figure}

\begin{definition} \cite{EArtin1926, EArtin1947}
The pure braid group on $n$ strands, denoted by $P_n$, is defined as the kernel of the homomorphism $B_n \longrightarrow S_n$ defined by $\sigma_i \longmapsto (i \ \ i+1)$, $1\leq i \leq n-1$, where $S_n$ is the symmetric group on $n$ elements. It admits a presentation with the following generators.
$$A_{ij}=\sigma_{j-1}\sigma_{j-2} \ldots \sigma_{i+1}\sigma^2_{i}\sigma^{-1}_{i+1} \ldots \sigma^{-1}_{j-2}\sigma^{-1}_{j-1}, \qquad 1\leq i<j\leq n.$$
\end{definition}

\begin{definition}\cite{LKauff1999}
The virtual braid group on $n$ strands, denoted by $VB_n$, is the group generated by the elements
\[
\sigma_1, \sigma_2, \ldots, \sigma_{n-1}
\]
of the braid group $B_n$ together with another family of elements, namely
$$\rho_1,\rho_2, \ldots, \rho_{n-1}.$$ 
In addition to the relations \eqref{eqs1} and \eqref{eqs2} that define $B_n$, the generators of $VB_n$ satisfy the following relations.
\begin{align}
\rho_i^2
    &= 1,
    && i=1,2,\ldots,n-1, \label{eqs11v} \\
\rho_i\rho_j
    &= \rho_j\rho_i,
    && |i-j|\ge 2, \label{eqs22v} \\
\rho_i\rho_{i+1}\rho_i
    &= \rho_{i+1}\rho_i\rho_{i+1},
    && i=1,2,\ldots,n-2, \label{eqs33v} \\
\sigma_i\rho_j
    &= \rho_j\sigma_i,
    && |i-j|\ge 2, \label{eqs44v} \\
\rho_i\rho_{i+1}\sigma_i
    &= \sigma_{i+1}\rho_i\rho_{i+1},
    && i=1,2,\ldots,n-2. \label{eqs55v}
\end{align}
\end{definition}


\begin{definition} \cite{RFenn1997}
The welded braid group on $n$ strands, denoted by $WB_n$, is the group defined as the quotient of $VB_n$ by adding the following relation.
\begin{equation} \label{eqs1W}
\rho_i\sigma_{i+1}\sigma_i=\sigma_{i+1}\sigma_i\rho_{i+1}, \qquad i=1,2,\ldots,n-2.
\end{equation}
\end{definition}

We now proceed to give the definitions of the triplet group, the pure triplet group, the virtual triplet group, and the welded triplet group.

\begin{definition} \cite{MKhovanov1996}
The triplet group on $n$ strands, denoted by $L_n$, is the group generated by the elements
\[
\ell_1, \ell_2, \ldots, \ell_{n-1},
\]
subject to the relations
\begin{align}
\ell_i^2
    &= 1,
    && i=1,2,\ldots,n-1, \label{eq:L1} \\
\ell_i\ell_{i+1}\ell_i
    &= \ell_{i+1}\ell_i\ell_{i+1},
    && i=1,2,\ldots,n-2. \label{eq:L2}
\end{align}
\end{definition}
\noindent We can see that\\
$\bullet$ $L_2=\langle \ell_1 \ | \ \ell_1^2=1 \rangle=\{e,\ell_1 \}\cong \mathbb{Z}_2.$\\
$\bullet$ $L_3=\langle \ell_1,\ell_2 \ | \ \ell_1^2=\ell_2^2=1, \ell_1\ell_2\ell_1=\ell_2\ell_1\ell_2\rangle=\{e,\ell_1, \ell_2, \ell_1\ell_2, \ell_2\ell_1, \ell_1\ell_2\ell_1\}\cong S_3.$

\begin{figure}[h]
\centering
		\includegraphics[width=5cm,height=2cm]{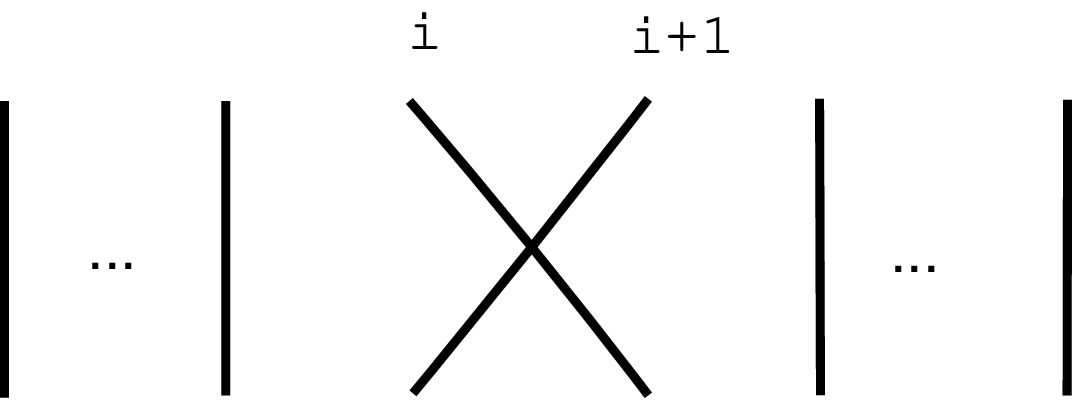}
	\caption{The generator $\ell_i.$}
\label{tripletgenerator}

\end{figure}

\begin{figure}[h]
	\centering
\includegraphics[width=10cm,height=2.2cm]{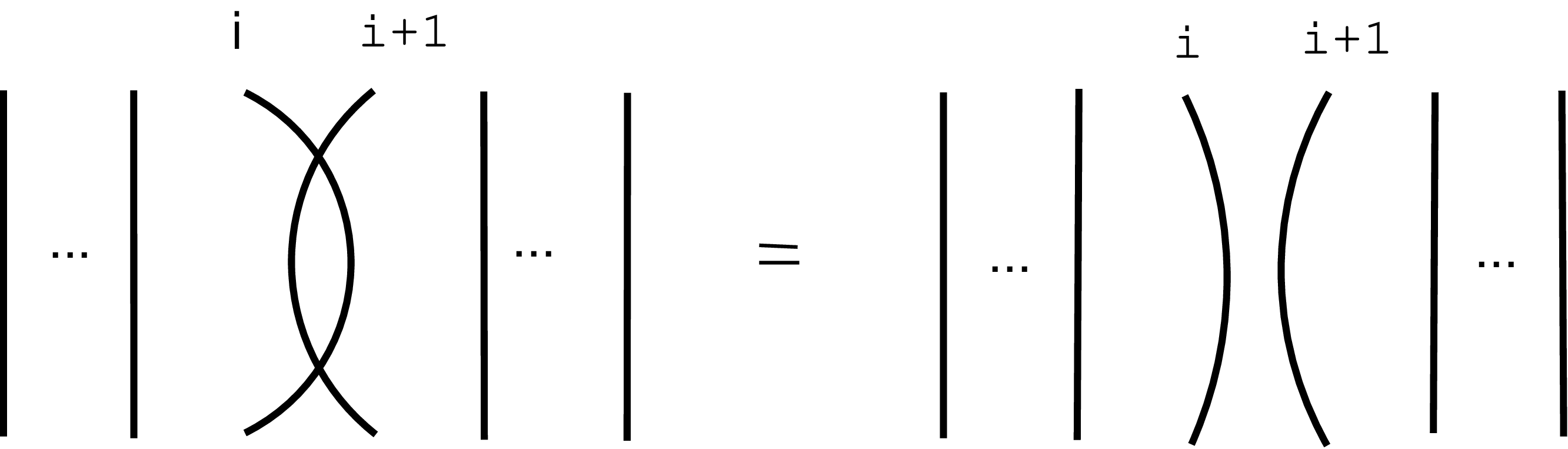}
	\caption{The relation  $\ell_i^2 =1$.}
\label{tripletrelation1}
\end{figure}

\begin{figure}[h]
\centering
\includegraphics[width=10cm,height=2.2cm]{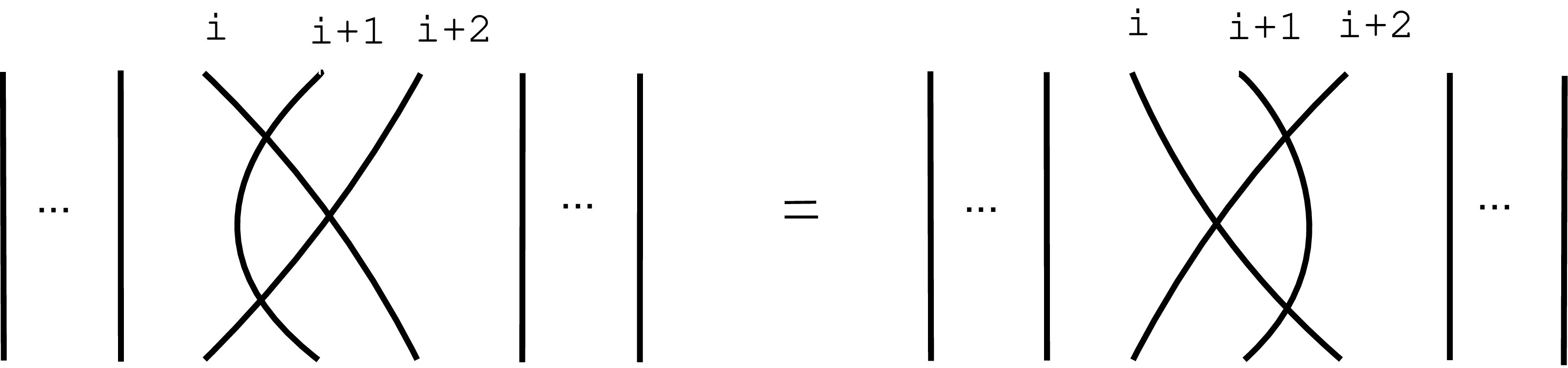}
	\caption{The relation $\ell_i \ell_{i+1} \ell_i 
= \ell_{i+1} \ell_i \ell_{i+1}$.}
\label{tripletrelation2}
\end{figure}

\begin{definition} \cite{TNaik2021}
The pure triplet group on $n$ strands, denoted by $PL_n$, is defined as the kernel of the homomorphism $L_n \longrightarrow S_n$ defined by $\ell_i \longmapsto (i \ \  i+1)$, $1\leq i \leq n-1$, where $S_n$ is the symmetric group on $n$ elements.
\end{definition}

\begin{definition}\cite{PKumar2024}
The virtual triplet group on $n$ strands, denoted by $VL_n$, is the group generated by the elements
\[
\ell_1, \ell_2, \ldots, \ell_{n-1}
\]
of the triplet group $L_n$ together with another family of elements, namely
$$\rho_1,\rho_2, \ldots, \rho_{n-1}.$$ 
In addition to the relations \eqref{eq:L1} and \eqref{eq:L2} that define $L_n$, the generators of $VL_n$ satisfy the following relations.
\begin{align}
\rho_i^2
    &= 1,
    && i=1,2,\ldots,n-1, \label{eqs1v} \\
\rho_i\rho_j
    &= \rho_j\rho_i,
    && |i-j|\ge 2, \label{eqs2v} \\
\rho_i\rho_{i+1}\rho_i
    &= \rho_{i+1}\rho_i\rho_{i+1},
    && i=1,2,\ldots,n-2, \label{eqs3v} \\
\ell_i\rho_j
    &= \rho_j\ell_i,
    && |i-j|\ge 2, \label{eqs4v} \\
\rho_i\rho_{i+1}\ell_i
    &= \ell_{i+1}\rho_i\rho_{i+1},
    && i=1,2,\ldots,n-2. \label{eqs5v}
\end{align}
\end{definition}
The generators $\rho_i$ are illustrated in Figure~\ref{vtripletgenerator}. The relations~(\ref{eqs1v}), (\ref{eqs3v}), and~(\ref{eqs5v}) are depicted in
Figures~\ref{vtripletrelation1},
\ref{vtripletrelation2}, and
\ref{vtripletrelation4}, respectively.
\begin{figure}[h]
\centering
		\includegraphics[width=5cm,height=2cm]{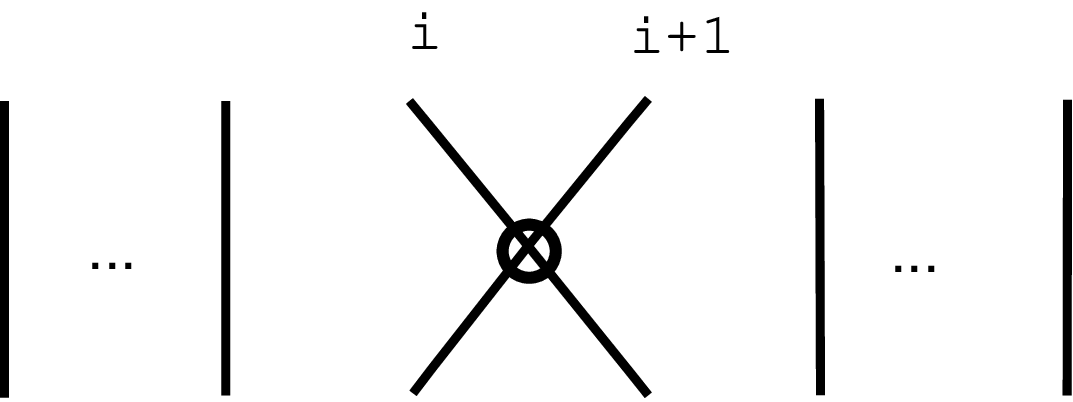}

	\caption{The generator $\rho_i.$}
\label{vtripletgenerator}
\end{figure}

\begin{figure}[h]
\centering
\includegraphics[width=10cm,height=2.2cm]{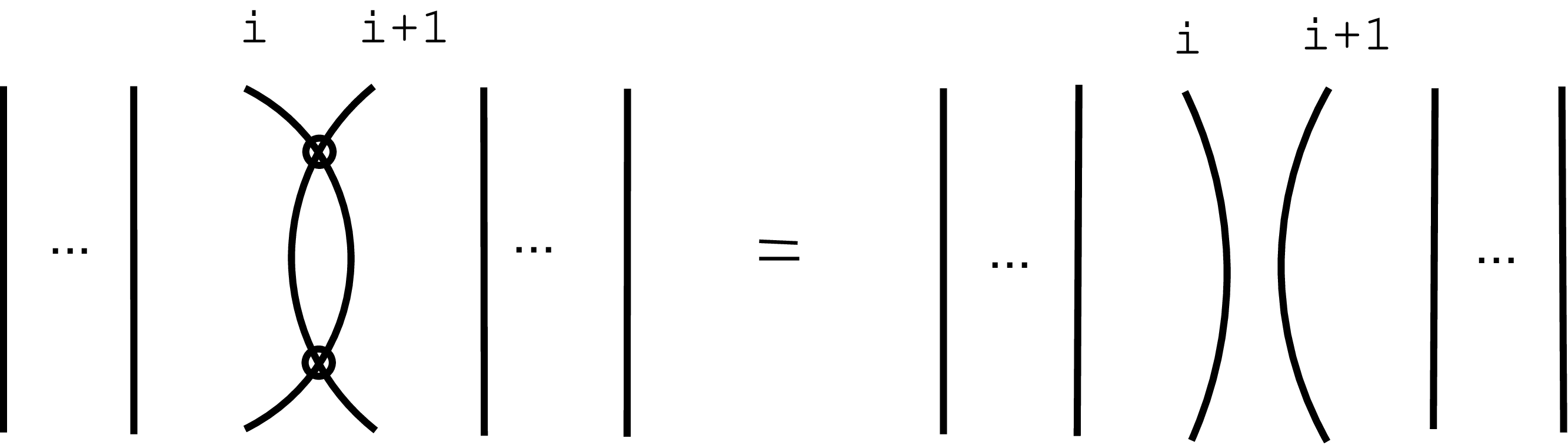}
	\caption{The relation $\rho_i^2=1$.}
\label{vtripletrelation1}
\end{figure}

\begin{figure}[h]
\centering
\includegraphics[width=10cm,height=2.2cm]{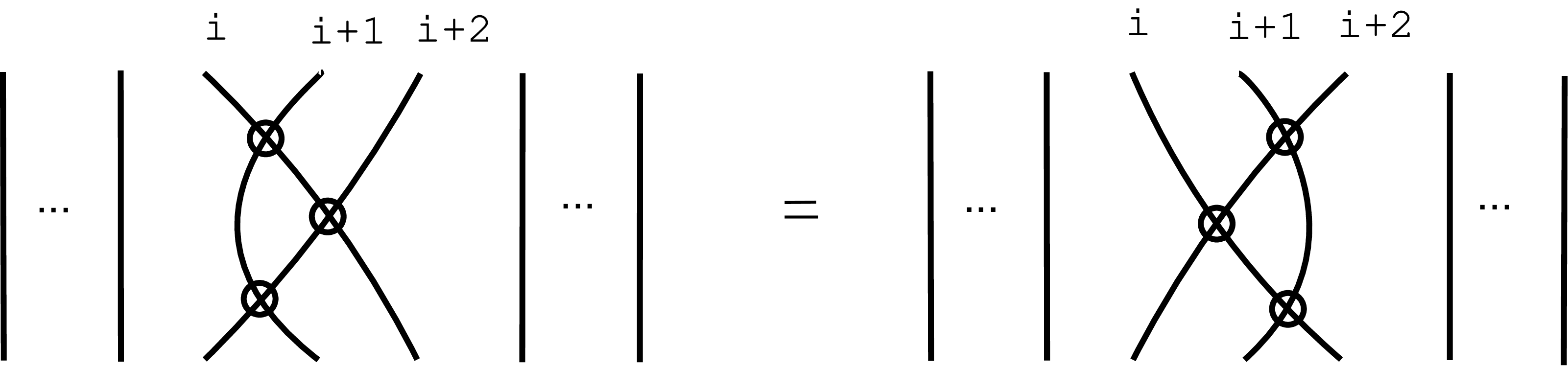}
	\caption{The relation $\rho_i \rho_{i+1} \rho_i 
= \rho_{i+1} \rho_i \rho_{i+1}$.}
\label{vtripletrelation2}
\end{figure}

\begin{figure}[h]
	\centering
\includegraphics[width=10cm,height=2.2cm]{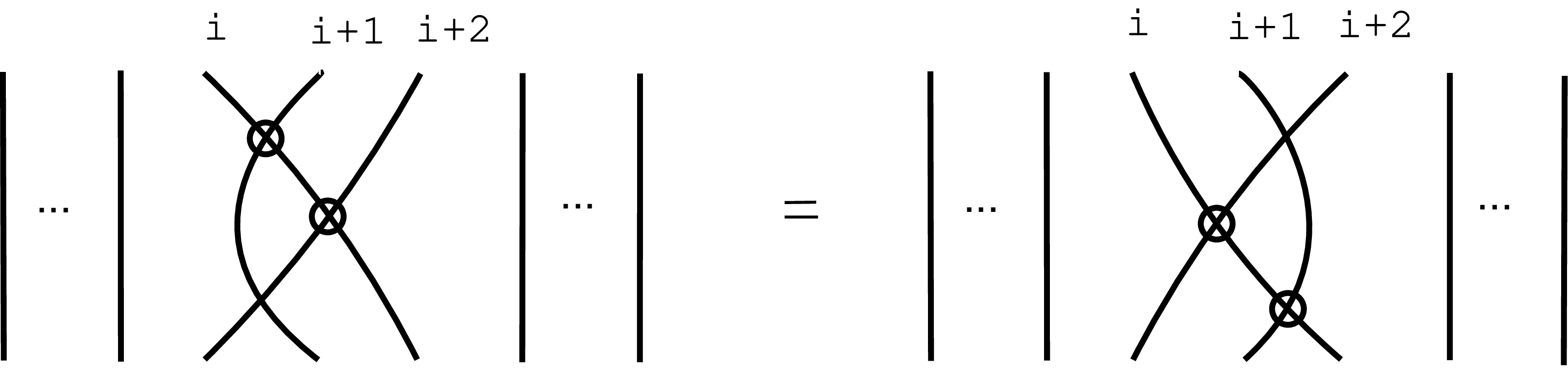}
	\caption{The relation $\rho_i \rho_{i+1} \ell_i 
= \ell_{i+1} \rho_i \rho_{i+1}$.}
\label{vtripletrelation4}
\end{figure}

We now introduce the definition of the welded triplet group, by direct analogy with the welded braid group.

\begin{definition}
The welded triplet group on $n$ strands, denoted by $WL_n$, is the group defined as the quotient of $VL_n$ by adding  the relation illustrated in
Figure~\ref{wrelation}:  
\begin{equation} \label{eqs1W}
\rho_i\ell_{i+1}\ell_i=\ell_{i+1}\ell_i\rho_{i+1}, \qquad i=1,2,\ldots,n-2.
\end{equation}
\end{definition}

\begin{figure}[h]
	\centering
\includegraphics[width=10cm,height=2.2cm]{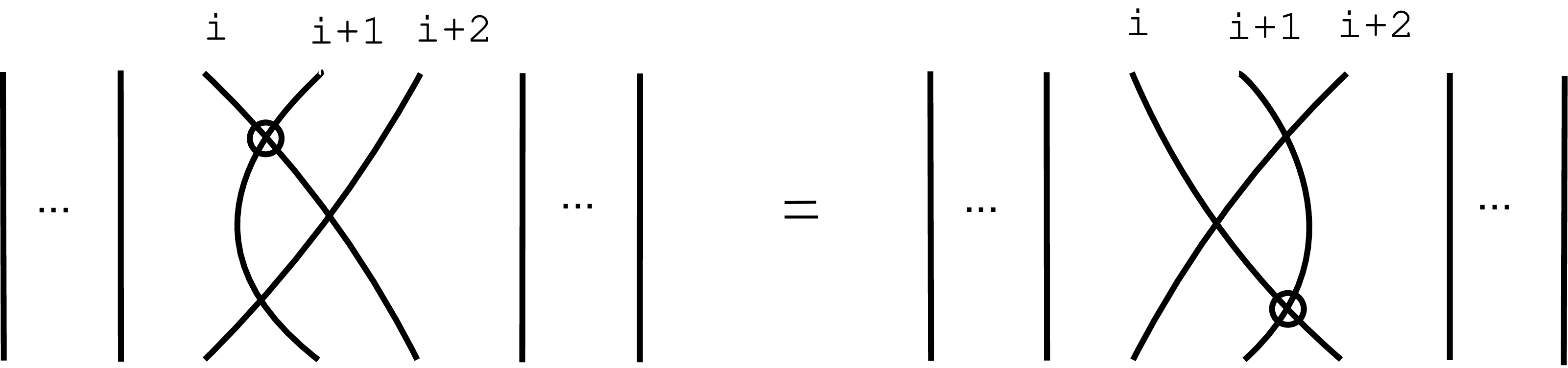}
	\caption{The welded  relation $\rho_i \ell_{i+1} \ell_i 
= \ell_{i+1} \ell_i \rho_{i+1}$.}
\label{wrelation}
\end{figure}




\section{Representations of the Triplet Group $L_n$ and Their Properties}

In this section, we study several representations of the triplet group $L_n$. We first investigate the irreducibility of the Tits representation of $L_n$ over the field $\mathbb{C}$. We then construct a new representation of $L_n$ into $\mathrm{GL}_n(\mathbb{Z}[t^{\pm 1}])$, where $t$ is an indeterminate. Additionally, we classify all complex homogeneous $2$-local representations of $L_n$ for all $n \geq 3$, as well as all complex non-homogeneous $2$-local representations of $L_3$.

\subsection{The Tits Representation of $L_n$}

\begin{definition}\cite{PKumar2024} \label{DefTits}
For $n \ge 3$, the Tits representation of the triplet group $L_n$ is the homomorphism
\[
\Theta : L_n \longrightarrow \mathrm{GL}_{n-1}(\mathbb{C}),
\qquad \Theta(\ell_i)=\Lambda_i,
\]
where the matrices $\Lambda_i$ are defined in the following way. For $l \ge 1$, let $M_l$ denote the $3\times l$ matrix whose first and third rows are zero and whose middle row consists entirely of $2$’s. For example,
\[
M_4=
\begin{pmatrix}
0 & 0 & 0 & 0\\
2 & 2 & 2 & 2\\
0 & 0 & 0 & 0
\end{pmatrix}.
\]
Let $N_l$ be the $2\times l$ matrix formed by the second and third rows of $M_l$, and let $K_l$ be the $2\times l$ matrix formed by the first and second rows of $M_l$. The matrices 
\[
\Lambda_i =
\begin{cases}
\begin{pmatrix}
\begin{pmatrix}
-1 & 1 \\
0 & 1 \\
\end{pmatrix}
& N_{n-3} \\
 0 & I_{n-3}
\end{pmatrix},
& \text{if } i=1,\\[1.2em]
\begin{pmatrix}
I_{i-2} & 0 & 0 \\
M_{i-2} &
\begin{pmatrix}
1 & 0 & 0\\
1 & -1 & 1\\
0 & 0 & 1
\end{pmatrix}
& M_{n-(i+2)} \\
0 & 0 & I_{n-(i+2)}
\end{pmatrix},
& \text{if } 2 \le i \le n-2,\\[1.2em]
\begin{pmatrix}
I_{n-3} & 0 \\
K_{n-3} &
\begin{pmatrix}
1 & 0\\
1 & -1
\end{pmatrix}
\end{pmatrix},
& \text{if } i=n-1.
\end{cases}
\]
\end{definition}

\noindent  We point out that we have defined the Tits representation of $L_n$ over $\mathbb{C}$, rather than over $\mathbb{Z}$ as in \cite{PKumar2024}, since our goal is to study its irreducibility over a field; to this end, we first introduce two lemmas.

\begin{lemma} \label{1lemma}
Let $\Theta : L_n \longrightarrow \mathrm{GL}_{n-1}(\mathbb{C})$ be the Tits representation defined in Definition~\ref{DefTits}, and let $U \subseteq \mathbb{C}^{\,n-1}$ be an invariant subspace under $\Theta$. Let $\{e_1,e_2,\ldots,e_{n-1}\}$ denote the standard basis of $\mathbb{C}^{\,n-1}$.  
If there exists an index $1 \le i \le n-1$ such that $e_i \in U$, then $U = \mathbb{C}^{\,n-1}$.
\end{lemma}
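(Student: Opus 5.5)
The plan is to read off from Definition~\ref{DefTits} how each $\Lambda_i$ acts on the standard basis. Then, starting from one basis vector $e_j \in U$, we produce every other basis vector $e_k$ by applying $\Lambda_k - I$, which preserves $U$ because $U$ is invariant under $\Lambda_k$ and is a subspace.

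\textbf{Step 1: structure of $\Lambda_i$.} Inspecting the block form in Definition~\ref{DefTits}, every row of $\Lambda_i$ other than the $i$-th row agrees with the corresponding row of the identity. This holds in all three cases $i=1$, $2\le i\le n-2$ and $i=n-1$: the matrices $M_l$, $N_l$, $K_l$ have their only nonzero row aligned with row $i$, and the $3\times 3$ or $2\times 2$ core block has nontrivial entries only in its middle (resp.\ relevant) row. The $i$-th row itself has the following entries:
\begin{itemize}
\item $-1$ in column $i$;
\item $1$ in the columns $i\pm 1$ (when they exist);
\item $2$ in every column $j$ with $|i-j|\ge 2$.
\end{itemize}
Consequently
\[
\Lambda_i e_i = -e_i, \qquad \Lambda_i e_j = e_j + c_{ij} e_i \quad (j\neq i),
\]
where $c_{ij}=1$ if $|i-j|=1$ and $c_{ij}=2$ if $|i-j|\ge 2$. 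In particular $c_{ij}\neq 0$ for every $j\neq i$. This verification is the only real work, and the one place I would double-check is the boundary cases $i=1$ and $i=n-1$: there the first (resp.\ last) row is $(-1,1,2,\dots,2)$ (resp.\ $(2,\dots,2,1,-1)$).

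\textbf{Step 2: generating all basis vectors.} Suppose $e_j\in U$. Since $U$ is $\Theta$-invariant, $\Lambda_k e_j\in U$ for every $k$. Hence for any $k\neq j$,
\[
(\Lambda_k - I)e_j = c_{kj} e_k \in U .
\]
Because $c_{kj}\in\{1,2\}$ is nonzero and we work over $\mathbb{C}$, we get $e_k\in U$. Together with $e_j\in U$, this gives all of $e_1,\dots,e_{n-1}$ in $U$, so $U=\mathbb{C}^{\,n-1}$.

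I expect no genuine obstacle. Note that the argument does not even need to pass through neighbouring indices, since every off-diagonal coefficient $c_{kj}$ is nonzero. The only care needed is the bookkeeping of block positions in Step 1.
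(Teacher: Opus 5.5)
Your proof is correct and is essentially the paper's argument, which likewise uses that $(\Lambda_k - I)e_j$ is a nonzero multiple of $e_k$; the only difference is that the paper routes through $e_1$ (first obtaining $e_1$ from $e_i$ via $\Lambda_1$, then every $e_k$ from $e_1$), while you pass directly from $e_j$ to each $e_k$. Your coefficients $c_{kj}\in\{1,2\}$ are also the accurate ones, since the paper writes $2e_i$ uniformly even though the coefficient is $1$ for adjacent indices, which is harmless.
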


\begin{proof}
Assume first that $i=1$, that is $e_1 \in U$. Since $U$ is invariant under $\Theta$, it follows that for each $2 \leq i \leq n-1$ we have
\[
\Theta(\ell_i)(e_1) - e_1 = 2 e_i \in U.
\]
This shows that $e_i \in U$ for all $2\leq i \leq n-1$. Hence, every basis vector $e_1, e_2, \dots, e_{n-1}$ belongs to $U$, and we conclude that $ U = \mathbb{C}^{n-1},$
as required. Now, suppose that $e_i \in U$ for some $1 < i \leq n-1$. Then
\[
\Theta(\ell_1)(e_i) - e_i = 2 e_1 \in U,
\]
and so $e_1 \in U$. Returning to the first case, we again conclude that $U = \mathbb{C}^{n-1}.$
\end{proof}
\begin{lemma}\label{2lemma}
The $n \times n$ matrix
\[
A_{n} =
\begin{pmatrix}
-2 & 1 & 2 & 2 & \cdots & 2 \\
1 & -2 & 1 & 2 & \cdots & 2 \\
2 & 1 & -2 & 1 & \cdots & 2 \\
\vdots & \ddots & \ddots & \ddots & \ddots & \vdots \\
2 & \cdots & 2 & 1 & -2 & 1 \\
2 & \cdots & 2 & 2 & 1 & -2
\end{pmatrix}
\]
is an invertible matrix for every positive integer $n$.
\end{lemma}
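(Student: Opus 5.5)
The plan is to split $A_n$ into a rank-one part plus a matrix we understand well, and then apply the matrix determinant lemma. Let $J$ denote the $n\times n$ all-ones matrix, $\mathbf{1}\in\mathbb{R}^n$ the all-ones vector (so $J=\mathbf{1}\mathbf{1}^T$), and $B$ the tridiagonal $0/1$ matrix with ones exactly on the sub- and super-diagonal (the adjacency matrix of a path). Comparing entries, $A_n$ has diagonal $-2=2-4$, entries next to the diagonal $1=2-1$, and all other entries $2$. Hence
\[
A_n = 2J - M, \qquad M := 4I + B .
\]

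The first step is to show that $M$ is positive definite. $B$ is the adjacency matrix of the path on $n$ vertices, whose eigenvalues are $2\cos\bigl(k\pi/(n+1)\bigr)$ for $k=1,\dots,n$, all lying in $(-2,2)$. So the eigenvalues of $M$ lie in $(2,6)$ and $M$ is symmetric positive definite; in particular it is invertible. (One could instead use diagonal dominance: $4>2$.)

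The second step applies the matrix determinant lemma to the rank-one update:
\[
\det A_n=\det(-M+2\mathbf{1}\mathbf{1}^T)=\det(-M)\bigl(1-2\,\mathbf{1}^TM^{-1}\mathbf{1}\bigr).
\]
Thus it suffices to prove that $s:=\mathbf{1}^TM^{-1}\mathbf{1}\neq \tfrac12$.

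This is the only real obstacle, because $M^{-1}$ has no convenient closed form. I would handle it with Cauchy--Schwarz in the inner product defined by $M$. Writing $(\mathbf{1}^T\mathbf{1})^2=(\mathbf{1}^TM^{1/2}\cdot M^{-1/2}\mathbf{1})^2$ gives
\[
(\mathbf{1}^T\mathbf{1})^2\le (\mathbf{1}^TM\mathbf{1})(\mathbf{1}^TM^{-1}\mathbf{1}).
\]
Since $\mathbf{1}^TM\mathbf{1}=4n+2(n-1)=6n-2$, this yields
\[
s\ge \frac{n^2}{6n-2}.
\]
This lower bound exceeds $\tfrac12$ exactly when $n^2-3n+1>0$, which holds for all $n\ge 3$. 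For those $n$ we get $1-2s<0$, so $\det A_n\neq 0$.

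The small cases are checked directly. For $n=1$, $A_1=(-2)$. For $n=2$, $\det A_2=4-1=3$. Therefore $A_n$ is invertible for every positive integer $n$.
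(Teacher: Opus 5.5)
Your proof is correct, and it supplies an argument the paper does not: the paper's proof of this lemma is only the remark that it ``follows from standard linear algebra arguments,'' with the details omitted. That remark presumably intends an explicit evaluation of $\det A_n$ by row reduction or a recurrence. Such a computation gives exact values, for instance $-2,\,3,\,8,\,-99,\,630$ for $n=1,\dots,5$. It is clumsy, though, because $A_n$ is dense and the sign pattern changes after $n=2$.

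Your splitting $A_n=2J-M$, with $M=4I+B$ positive definite, avoids any exact computation. The matrix determinant lemma reduces everything to the scalar condition $\mathbf{1}^TM^{-1}\mathbf{1}\neq\tfrac12$, and Cauchy--Schwarz in the $M$-inner product settles this for $n\ge3$ via $n^2-3n+1>0$.

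It also gives more than invertibility. The matrix $-A_n=M-2\mathbf{1}\mathbf{1}^T$ is a rank-one negative perturbation of a positive definite matrix, so it has at most one negative eigenvalue. Your sign $1-2s<0$ shows it has exactly one for $n\ge3$, while for $n=1,2$ one has $1-2s>0$ and $-A_n$ is positive definite. Since $-\tfrac12A_n$ is the Gram matrix of the Tits form of $L_{n+1}$, this matches the finiteness of $L_2$ and $L_3$ and the infinitude of $L_{n+1}$ for $n\ge3$. In particular, your separate check of $n=1,2$ is not an artifact of a weak bound: the sign of $1-2s$ genuinely flips between $n=2$ and $n=3$.
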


\begin{proof}
The result follows from standard linear algebra arguments; we omit the routine details.
\end{proof}

\begin{theorem} \label{Tits irr}
The Tits representation $\Theta : L_n \longrightarrow \mathrm{GL}_{n-1}(\mathbb{C})$ given in Definition~\ref{DefTits} is irreducible.
\end{theorem}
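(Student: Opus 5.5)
Let $U\subseteq\mathbb{C}^{n-1}$ be a nonzero invariant subspace; the aim is to show $U=\mathbb{C}^{n-1}$. By Lemma~\ref{1lemma} it suffices to show that $U$ contains some standard basis vector $e_i$. The matrices $\Lambda_i$ have a rigid shape: $\Lambda_i$ differs from the identity only in its $i$-th row. That row has entry $-1$ in position $i$, entry $1$ in positions $i\pm1$, and entry $2$ in the remaining positions. (Indeed the middle rows of $M_l$, $N_l$, $K_l$ sit in row $i$.) Consequently, for every vector $u=(u_1,\dots,u_{n-1})^T$ we have
\[
\Lambda_i u - u = \Big(-2u_i + u_{i-1}+u_{i+1} + 2\sum_{|j-i|\ge 2} u_j\Big)e_i = (A_{n-1}u)_i\, e_i ,
\]
where $A_{n-1}$ is exactly the matrix of Lemma~\ref{2lemma}, with the usual convention that $u_0=u_n=0$. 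So the first step is to record this formula carefully for $i=1$, $2\le i\le n-2$ and $i=n-1$ directly from Definition~\ref{DefTits}.

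The second step is the key argument. Pick any nonzero $u\in U$. Since $U$ is invariant, $\Lambda_i u - u = (A_{n-1}u)_i e_i\in U$ for every $i$. By Lemma~\ref{2lemma}, $A_{n-1}$ is invertible, so $A_{n-1}u\neq 0$. Hence there is an index $i$ with $(A_{n-1}u)_i\neq 0$, and dividing by this scalar gives $e_i\in U$. Lemma~\ref{1lemma} then yields $U=\mathbb{C}^{n-1}$, which proves irreducibility. The trivial cases are $n=3$, where $A_2$ has determinant $3\neq 0$, and the general hypothesis $n\ge 3$ of Definition~\ref{DefTits}.

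The main obstacle is the bookkeeping in the first step. One must verify that the block description of $\Lambda_i$ (with $M_{i-2}$, $M_{n-(i+2)}$, $N_{n-3}$, $K_{n-3}$) really places the $2$'s and $\pm1$'s in row $i$ only, with no stray entries in the rows adjacent to the middle $3\times3$ block. Once the row structure is confirmed, the identification with the rows of $A_{n-1}$ is immediate.

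Lemma~\ref{2lemma} is taken as given. If needed, it can be justified by writing $A_n = 2J - 4I + T$, where $J$ is the all-ones matrix and $T$ is a tridiagonal matrix with $-1$ on the off-diagonals. Since $T-4I$ is symmetric and negative definite with known eigenvalues, the matrix determinant lemma applied to this rank-one update of $T-4I$ shows that $\det A_n\neq0$.
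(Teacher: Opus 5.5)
Your argument is correct and is essentially the paper's proof: both compute $\Lambda_i u-u=(A_{n-1}u)_i\,e_i$ and combine the invertibility of $A_{n-1}$ (Lemma~\ref{2lemma}) with Lemma~\ref{1lemma}; the paper argues by contradiction (no $e_i$ lies in $U$, which forces $A_{n-1}u=0$), whereas you argue directly. One caution about your optional aside on Lemma~\ref{2lemma}: negative definiteness of $T-4I$ does not by itself make the positive rank-one update $A_n=(T-4I)+2\mathbf{1}\mathbf{1}^T$ nonsingular, because the determinant lemma only reduces the question to showing $\mathbf{1}^T(4I-T)^{-1}\mathbf{1}\neq\tfrac12$, and that still needs an actual argument (e.g.\ for $(4I-T)v=\mathbf{1}$, summing the equations gives $6\sum_i v_i=n+v_1+v_n$, diagonal dominance gives $|v_i|\le\tfrac12$, and this leaves only $n=2,3,4$ to check by hand).
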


\begin{proof}
Suppose, toward a contradiction, that the Tits representation is reducible. 
Then there exists a nontrivial proper subspace 
\( U \subset \mathbb{C}^{n-1} \) that is invariant under \( \Theta \).
Choose a nonzero vector 
\( u = (u_1, u_2, \ldots, u_{n-1})^{T} \in U \).
Since \( U \) is \( \Theta \)-invariant, we have \( \Theta(\ell_i)u-u \in U \)
for every $1\leq i \leq n-1$. Hence, the following hold.\\
$\Theta(\ell_1)(u)-u=(-2u_1+u_2+2u_3+2u_4+\ldots+2u_{n-1})e_1\in U$,\\
$\Theta(\ell_2)(u)-u=(u_1-2u_2+u_3+2u_4+2u_5+\ldots+2u_{n-1})e_2\in U$,\\
$\Theta(\ell_3)(u)-u=(2u_1+u_2-2u_3+u_4+2u_5+2u_6+\ldots+2u_{n-1})e_3\in U$,\\
\hspace*{0.4cm} \vdots\\
$\Theta(\ell_{n-2})(u)-u=(2u_1+2u_2+\ldots+2u_{n-4}+u_{n-3}-2u_{n-2}+u_{n-1})e_{n-2}\in U$,\\
$\Theta(\ell_{n-1})(u)-u=(2u_1+2u_2+\ldots+2u_{n-3}+u_{n-2}-2u_{n-1})e_{n-1}\in U$.\\
By Lemma~\ref{1lemma}, none of the standard basis vectors 
\( e_i \), \( 1 \le i \le n-1 \), belongs to \( U \) as \( U \) is nontrivial. This leads to the following system of equations.

\[
\begin{cases}
-2u_1 + u_2 + 2u_3 + 2u_4 + \cdots + 2u_{n-1} = 0, \\[4pt]
u_1 - 2u_2 + u_3 + 2u_4 + 2u_5 + \cdots + 2u_{n-1} = 0, \\[4pt]
2u_1 + u_2 - 2u_3 + u_4 + 2u_5 + 2u_6 + \cdots + 2u_{n-1} = 0, \\[4pt]
\hspace*{0.5cm}\vdots \\[4pt]
2u_1 + 2u_2 + \cdots + 2u_{n-4} + u_{n-3} - 2u_{n-2} + u_{n-1} = 0, \\[4pt]
2u_1 + 2u_2 + \cdots + 2u_{n-3} + u_{n-2} - 2u_{n-1} = 0 .
\end{cases}
\]
Transforming this system to a matrix equation, we get that 
$$A_{n-1}u=0,$$
where $A_{n-1}$ is the $(n-1)\times(n-1)$ matrix given by
\[
A_{n-1} =
\begin{pmatrix}
-2 & 1 & 2 & 2 & \cdots & 2 \\
1 & -2 & 1 & 2 & \cdots & 2 \\
2 & 1 & -2 & 1 & \cdots & 2 \\
\vdots & \ddots & \ddots & \ddots & \ddots & \vdots \\
2 & \cdots & 2 & 1 & -2 & 1 \\
2 & \cdots & 2 & 2 & 1 & -2
\end{pmatrix}.
\]
By Lemma \ref{2lemma}, we know that \( A_{n-1} \) is invertible.
Consequently, the matrix equation \( A_{n-1}u = 0 \) has only the zero solution,
contradicting the assumption that \( u \) is a nonzero vector.
Therefore, the Tits representation \( \Theta \) is irreducible, as desired.
\end{proof}

\subsection{Constructing a New Representation of $L_n$}

Let $t$ be an indeterminate and $k \in \mathbb{Z}$. Let $x_1, x_2, \ldots, x_n$ be the standard basis of $\mathbb{Z}[t^{\pm 1}]^n$. Define a mapping
\[
\mu : L_n \longrightarrow \mathrm{GL}_n(\mathbb{Z}[t^{\pm 1}])
\]
by
\[
\mu(\ell_i):
\begin{cases}
x_i \longmapsto t^kx_{i+1},\\
x_{i+1} \longmapsto t^{-k}x_{i},\\
x_j \longmapsto x_j \quad \text{for } j \notin \{ i,i+1 \}.
\end{cases}
\]

\begin{theorem} \label{defmuu}
The mapping $\mu$ defines a representation of $L_n$.
\end{theorem}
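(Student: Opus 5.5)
To prove Theorem \ref{defmuu}, we use the presentation of $L_n$: by von Dyck's theorem, $\mu$ extends to a homomorphism $L_n \longrightarrow \mathrm{GL}_n(\mathbb{Z}[t^{\pm 1}])$ exactly when the assigned images satisfy the defining relations \eqref{eq:L1} and \eqref{eq:L2}. So we check, on the basis $x_1,\ldots,x_n$, that $\mu(\ell_i)^2=I_n$ for $1\le i\le n-1$ and that $\mu(\ell_i)\mu(\ell_{i+1})\mu(\ell_i)=\mu(\ell_{i+1})\mu(\ell_i)\mu(\ell_{i+1})$ for $1\le i\le n-2$. Invertibility of each $\mu(\ell_i)$ over $\mathbb{Z}[t^{\pm1}]$ then follows from the first relation, since $\mu(\ell_i)$ is its own inverse. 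Equivalently, $\mu(\ell_i)$ is the identity matrix with the $2\times 2$ block in rows and columns $i,i+1$ replaced by $\begin{pmatrix}0 & t^{-k}\\ t^{k} & 0\end{pmatrix}$, which has determinant $-1$.

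\textbf{The relation $\ell_i^2=1$.} We compute $\mu(\ell_i)^2(x_i)=\mu(\ell_i)(t^kx_{i+1})=t^kt^{-k}x_i=x_i$, and symmetrically $\mu(\ell_i)^2(x_{i+1})=x_{i+1}$. All other $x_j$ are fixed, so $\mu(\ell_i)^2=I_n$.

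\textbf{The braid relation.} Both sides fix $x_j$ for $j\notin\{i,i+1,i+2\}$, so it suffices to track $x_i,x_{i+1},x_{i+2}$. For the left side $\mu(\ell_i)\mu(\ell_{i+1})\mu(\ell_i)$, applying the rightmost factor first:
\begin{align*}
x_i &\mapsto t^kx_{i+1}\mapsto t^{2k}x_{i+2}\mapsto t^{2k}x_{i+2},\\
x_{i+1} &\mapsto t^{-k}x_i\mapsto t^{-k}x_i\mapsto x_{i+1},\\
x_{i+2} &\mapsto x_{i+2}\mapsto t^{-k}x_{i+1}\mapsto t^{-2k}x_i.
\end{align*}
For the right side $\mu(\ell_{i+1})\mu(\ell_i)\mu(\ell_{i+1})$:
\begin{align*}
x_i &\mapsto x_i\mapsto t^kx_{i+1}\mapsto t^{2k}x_{i+2},\\
x_{i+1} &\mapsto t^kx_{i+2}\mapsto t^kx_{i+2}\mapsto x_{i+1},\\
x_{i+2} &\mapsto t^{-k}x_{i+1}\mapsto t^{-2k}x_i\mapsto t^{-2k}x_i.
\end{align*}
The two sides agree on every basis vector, so the braid relation holds.

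The only point needing care is this three-index bookkeeping of the powers of $t$. Conceptually, $\mu$ is the permutation representation of $S_n$ conjugated by the diagonal matrix $\mathrm{diag}(1,t^{k},t^{2k},\ldots,t^{(n-1)k})$, which already explains why all the relations hold.
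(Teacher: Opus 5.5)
Your proof is correct and follows the paper's argument: you verify $\ell_i^2=1$ and the braid relation directly on the basis vectors $x_i,x_{i+1},x_{i+2}$ (all other $x_j$ being fixed), with the same powers of $t$ at each step. Your closing remark that $\mu$ is the permutation representation of $S_n$ conjugated by $\mathrm{diag}(1,t^{k},\ldots,t^{(n-1)k})$ is also correct and would give a shorter proof on its own; the paper uses the same diagonal-conjugation idea later, in the proof of Theorem~\ref{Thmir}.
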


\begin{proof}
To prove that $\mu$ defines a representation of $L_n$, it suffices to show that $\mu$ preserves the defining relations of $L_n$. For $1 \leq i \leq n-1$, we examine the two relations of $L_n$ in the following cases.
\begin{itemize}
    \item [(1)] \underline{$\mu(\ell_i)^2 = 1$}.\vspace{0.1cm} \\ To prove this, we should verify that $
\mu(\ell_i)^2(x_j) = x_j$ for all $1 \leq j \leq n.$ 
For this purpose, we consider, in the following, the three cases in order: $j = i$, $j = i+1$, and $j \notin \{ i,i+1 \}$.\vspace{0.1cm}
    \begin{itemize}
        \item[$\bullet$] $\mu(\ell_i)^2(x_i)= \mu(\ell_i)(t^kx_{i+1})= t^k\mu(\ell_i)(x_{i+1})=
        t^k(t^{-k}x_i)=x_i.$\vspace{0.1cm}
        \item[$\bullet$] $\mu(\ell_i)^2(x_{i+1})= \mu(\ell_i)(t^{-k}x_{i})=t^{-k}\mu(\ell_i)(x_{i})=
        t^{-k}(t^{k}x_{i+1})=x_{i+1}.$\vspace{0.1cm}
         \item[$\bullet$] $\mu(\ell_i)^2(x_{j})=\mu(\ell_i)(x_{j})=x_j$ for all $j \notin \{ i,i+1 \}.$
    \end{itemize}
    Thus, the mapping $\mu$ preserves the relation $\ell_i^2=1$ of $L_n$, as required.\vspace{0.1cm}
    \item[(2)] 
\underline{$\mu(\ell_i)\,\mu(\ell_{i+1})\,\mu(\ell_i) = \mu(\ell_{i+1})\,\mu(\ell_i)\,\mu(\ell_{i+1})$}.\vspace{0.1cm}
\\ 
Again, it is enough to prove that 
$
\mu(\ell_i)\,\mu(\ell_{i+1})\,\mu(\ell_i)(x_j) = \mu(\ell_{i+1})\,\mu(\ell_i)\,\mu(\ell_{i+1})(x_j)$ 
for all $1 \leq j \leq n$. To this end, we consider the following four cases: $j=i$, $j=i+1$, $j=i+2$, and $j \notin \{ i,i+1,i+2 \}$. These cases are treated successively below.
    \begin{itemize}
        \item[$\bullet$] First, we consider the left hand side of the equation. We have:\\  $\mu(\ell_i)\mu(\ell_{i+1})\mu(\ell_i)(x_i)= \mu(\ell_i)\mu(\ell_{i+1})(t^kx_{i+1})\\ \hspace*{3.19cm}=
        t^k\mu(\ell_i)\mu(\ell_{i+1})(x_{i+1})\\ \hspace*{3.19cm}=t^k\mu(\ell_i)(t^kx_{i+2})\\ \hspace*{3.19cm}=t^{2k}\mu(\ell_i)(x_{i+2})\\ \hspace*{3.19cm} =t^{2k}x_{i+2}.$\\
        Now, for the right hand side we have:\\
        $\mu(\ell_{i+1})\mu(\ell_{i})\mu(\ell_{i+1})(x_i)= \mu(\ell_{i+1})\mu(\ell_{i})(x_{i})\\ \hspace*{3.55cm}=  \mu(\ell_{i+1})(t^kx_{i+1})\\ \hspace*{3.55cm}= t^k\mu(\ell_{i+1})(x_{i+1})\\ \hspace*{3.55cm}=t^{k}(t^kx_{i+2})\\ \hspace*{3.55cm}=t^{2k}x_{i+2}$.\\
A comparison of the left hand side and the right hand side shows that they are equal.
        \item[$\bullet$] Also, we consider first the left hand side of the equation. We have:\\  $\mu(\ell_i)\mu(\ell_{i+1})\mu(\ell_i)(x_{i+1})= \mu(\ell_i)\mu(\ell_{i+1})(t^{-k}x_{i})\\ \hspace*{3.54cm}=
        t^{-k}\mu(\ell_i)\mu(\ell_{i+1})(x_{i})\\ \hspace*{3.54cm}=t^{-k}\mu(\ell_i)(x_{i})\\ \hspace*{3.54cm}=t^{-k}(t^kx_{i+1})\\ \hspace*{3.54cm} =x_{i+1}.$\\
        Now, for the right hand side we have:\\
        $\mu(\ell_{i+1})\mu(\ell_{i})\mu(\ell_{i+1})(x_{i+1})= \mu(\ell_{i+1})\mu(\ell_{i})(t^kx_{i+2})\\ \hspace*{3.9cm}=  t^k\mu(\ell_{i+1})\mu(\ell_{i})(x_{i+2})\\ \hspace*{3.9cm}= t^k\mu(\ell_{i+1})(x_{i+2})\\ \hspace*{3.9cm}=t^{k}(t^{-k}x_{i+1})\\ \hspace*{3.9cm}=x_{i+1}$.\\
Again, the left hand side and the right hand side are equal.
        \item[$\bullet$] Similarly, we consider first the left hand side of the equation. We have:\\  $\mu(\ell_i)\mu(\ell_{i+1})\mu(\ell_i)(x_{i+2})= \mu(\ell_i)\mu(\ell_{i+1})(x_{i+2})\\ \hspace*{3.54cm}=\mu(\ell_i)(t^{-k}x_{i+1})\\ \hspace*{3.54cm}=
        t^{-k}\mu(\ell_i)(x_{i+1})\\ \hspace*{3.54cm}=t^{-k}(t^{-k}x_{i})\\ \hspace*{3.54cm} =t^{-2k}x_{i}.$\\
        For the right hand side we have:\\
        $\mu(\ell_{i+1})\mu(\ell_{i})\mu(\ell_{i+1})(x_{i+2})= \mu(\ell_{i+1})\mu(\ell_{i})(t^{-k}x_{i+1})\\ \hspace*{3.9cm}=  t^{-k}\mu(\ell_{i+1})\mu(\ell_{i})(x_{i+1})\\ \hspace*{3.9cm}= t^{-k}\mu(\ell_{i+1})(t^{-k}x_{i})\\ \hspace*{3.9cm}=t^{-2k}\mu(\ell_{i+1})(x_{i})\\ \hspace*{3.9cm}=t^{-2k}x_{i}$.\\
We get also the left hand side and the right hand side are equal.
\item[$\bullet$] When $j \notin \{ i,i+1,i+2 \}$, we have $\mu(\ell_i)(x_j) = \mu(\ell_{i+1})(x_j) = x_j$. Hence, the relation is trivially satisfied in this case.
    \end{itemize}
    Thus, the mapping $\mu$ preserves the relation $\ell_i \ell_{i+1} \ell_i 
= \ell_{i+1} \ell_i \ell_{i+1}$ of $L_n$, as desired.
\end{itemize}
Therefore, the mapping $\mu$ defines a representation of $L_n$, as required.
\end{proof}

In what follows, we consider the matrix representation of $L_n$ associated
with the representation $\mu$. We denote it by $\mu'$ to distinguish it
from the module representation $\mu$.

\begin{definition}\label{propmu}
The matrix representation of $L_n$ induced by $\mu$ is the representation 
$$\mu':L_n\longrightarrow \mathrm{GL}_n(\mathbb{Z}[t^{\pm 1}])$$
given by sending each generator $\ell_i$, for $1\le i\le n-1$, to the block matrix
\[
\begin{pmatrix}
I_{i-1} & 0 & 0 \\
0 &
\begin{pmatrix}
0 & t^k \\
t^{-k} & 0
\end{pmatrix}
& 0 \\
0 & 0 & I_{n-i-1}
\end{pmatrix}.
\]
\end{definition}


The following theorem provides a complete study of the faithfulness of the representation $\mu'$.

\begin{theorem}\label{Thmfa}
The representation $\mu'$ is faithful for $n=2$ and $n=3$, and unfaithful for all $n\geq 4$.
\end{theorem}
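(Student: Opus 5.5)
Since $\mu'(\ell_i)$ is a monomial matrix whose underlying permutation is the transposition $(i\ \ i+1)$, I would begin by observing that $\mu'$ factors through a group of monomial matrices. The composite of $\mu'$ with the map sending a monomial matrix to its underlying permutation is exactly the natural homomorphism $L_n\to S_n$, $\ell_i\mapsto(i\ \ i+1)$. Consequently $\ker\mu'\subseteq PL_n$.

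For $n=2$ and $n=3$ the argument is direct. When $n=2$, $L_2=\{e,\ell_1\}$ and $\mu'(\ell_1)\neq I_2$, so $\mu'$ is faithful. When $n=3$, the paper notes that $L_3\cong S_3$ via $\ell_i\mapsto(i\ \ i+1)$. In particular $PL_3$ is trivial, and the inclusion $\ker\mu'\subseteq PL_3$ gives faithfulness. If one prefers not to rely on the isomorphism, one can simply list the six elements $e,\ell_1,\ell_2,\ell_1\ell_2,\ell_2\ell_1,\ell_1\ell_2\ell_1$. Their images have pairwise distinct underlying permutations, so they are distinct matrices.

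For $n\ge4$ I would exhibit an explicit nontrivial element of the kernel, namely $w=(\ell_1\ell_3)^2$. In $L_n$ there is no relation between $\ell_1$ and $\ell_3$, so $\langle \ell_1,\ell_3\rangle$ should be the infinite dihedral group. On the other hand, $\mu'(\ell_1)$ and $\mu'(\ell_3)$ act on the disjoint coordinate pairs $\{1,2\}$ and $\{3,4\}$, so they commute. Since each squares to $I$, we get $\mu'(w)=\mu'(\ell_1)^2\mu'(\ell_3)^2=I$.

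The remaining and main step is to show that $w\neq e$ in $L_n$. I would do this by defining a homomorphism from $L_n$ onto the infinite dihedral group $D_\infty=\langle a,b\mid a^2=b^2=1\rangle$, sending $\ell_1\mapsto a$, $\ell_3\mapsto b$, and $\ell_j\mapsto a$ for $j=2$ and $j\ge4$. I need to check the relations. The involution relations $\ell_i^2=1$ hold trivially. The braid relation $\ell_i\ell_{i+1}\ell_i=\ell_{i+1}\ell_i\ell_{i+1}$ holds automatically whenever both generators go to the same involution. The pairs $(\ell_2,\ell_3)$ and $(\ell_3,\ell_4)$, however, would require $aba=bab$, which fails in $D_\infty$. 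So this choice must be adjusted: send $\ell_2\mapsto c$ with $c$ chosen so that all braid relations hold.

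A cleaner approach is to map into $S_3$ or into a dihedral group where $(\ell_1\ell_3)^2$ is nontrivial. For instance, take $\ell_1\mapsto(1\,2)$, $\ell_2\mapsto(2\,3)$, $\ell_3\mapsto(1\,3)$, and $\ell_j$ for $j\ge4$ alternating with $\ell_j\mapsto$ the image of $\ell_{j-2}$. This is consistent because any two distinct transpositions in $S_3$ satisfy the braid relation. Under this map, $\ell_1\ell_3\mapsto(1\,2)(1\,3)$, which is a $3$-cycle, so $(\ell_1\ell_3)^2$ maps to a nontrivial $3$-cycle. Hence $w\neq e$, and $\mu'$ is unfaithful for every $n\ge4$.

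I expect this last step, certifying that the kernel element is nontrivial via a well-chosen quotient, to be the only nonroutine point.
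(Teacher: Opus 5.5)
Your proof is correct and follows the paper's route: a trivial check for $n=2$, a finite check for $n=3$ (your observation that $\ker\mu'\subseteq PL_n$, via the underlying permutations of the monomial matrices, is a cleaner way to organize it), and for $n\ge 4$ the fact that the images of non-adjacent generators commute. Your kernel element $(\ell_1\ell_3)^2$ is nontrivial exactly when $\ell_1\ell_3\neq\ell_3\ell_1$, and your homomorphism $L_n\to S_3$ proves this non-commutation; the homomorphism is well defined because any two transpositions of $S_3$ satisfy the braid relation. The paper only asserts this non-commutation without justification.
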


\begin{proof}
We divide the proof into three cases as follows.\vspace{0.05cm}
\begin{itemize}
    \item[(1)] \underline{The case $n=2$}: In this case, we have $L_2=\{e,\ell_1 \}$ and 
    $$\mu'(\ell_1)=\begin{pmatrix}
0 & t^k \\
t^{-k} & 0
\end{pmatrix}\neq I_2.$$
Hence, clearly $\mu'$ is faithful.\vspace{0.1cm}
\item[(2)] \underline{The case $n=3$}: In this case, we have $L_3=\{e,\ell_1, \ell_2, \ell_1\ell_2, \ell_2\ell_1, \ell_1\ell_2\ell_1\},$
where their images under $\mu'$ are given as follows.
 $$\mu'(\ell_1)=\begin{pmatrix}
0 & t^k & 0\\
t^{-k} & 0 & 0\\
0 & 0 & 1
\end{pmatrix}\neq I_3,$$
$$
\mu'(\ell_2)=\left(
\begin{array}{ccc}
 1 & 0 & 0 \\
 0 & 0 & t^k \\
 0 & t^{-k} & 0 \\
\end{array}
\right)\neq I_3,
$$
$$
\mu'(\ell_1\ell_2)=\left(
\begin{array}{ccc}
 0 & 0 & t^{2 k} \\
 t^{-k} & 0 & 0 \\
 0 & t^{-k} & 0 \\
\end{array}
\right)\neq I_3,
$$
$$
\mu'(\ell_2\ell_1)=\left(
\begin{array}{ccc}
 0 & t^k & 0 \\
 0 & 0 & t^k \\
 t^{-2 k} & 0 & 0 \\
\end{array}
\right)\neq I_3,
$$
and
$$
\mu'(\ell_1\ell_2\ell_1)=\left(
\begin{array}{ccc}
 0 & 0 & t^{2 k} \\
 0 & 1 & 0 \\
 t^{-2 k} & 0 & 0 \\
\end{array}
\right)\neq I_3.
$$
Thus, we get that $\mu'$ is faithful in this case also.\vspace{0.1cm}
\item[(3)] \underline{The case $n \geq 4$}: For all $1 \leq i,j \leq n-1$ with $|i-j| \geq 2$, a direct computation shows that $\mu'(\ell_i)\mu'(\ell_j)=\mu'(\ell_j)\mu'(\ell_i).$
However, since the commutative relation $\ell_i\ell_j=\ell_j\ell_i$ does not hold in $L_n$, it follows that the representation $\mu'$ is unfaithful in this case.
\end{itemize}
\end{proof}

\begin{proposition}
For $n \geq 4$, the kernel of the representation
\[
\mu' : L_n \longrightarrow \mathrm{GL}_n(\mathbb{Z}[t^{\pm 1}])
\]
constructed in Definition \ref{propmu} coincides with the pure triplet group $PL_n$.
\end{proposition}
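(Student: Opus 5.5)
The plan is to show that $\mu'$ is conjugate, over $\mathbb{Z}[t^{\pm 1}]$, to the composite of the natural projection $\pi : L_n \longrightarrow S_n$, $\ell_i \longmapsto (i\ \ i+1)$, with the standard permutation representation $P : S_n \longrightarrow \mathrm{GL}_n(\mathbb{Z}[t^{\pm 1}])$. Since $P$ is faithful, this gives
\[
\ker \mu' = \ker (P\circ \pi) = \ker \pi = PL_n .
\]
(The argument does not actually use $n \ge 4$.)

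The key step is a change of basis by a diagonal matrix. Put $y_j = t^{jk} x_j$ for $1 \le j \le n$. This is a basis of $\mathbb{Z}[t^{\pm 1}]^n$, because the change-of-basis matrix $D=\mathrm{diag}(t^{k},t^{2k},\ldots,t^{nk})$ is invertible over $\mathbb{Z}[t^{\pm 1}]$. Using the formulas defining $\mu$ in Theorem~\ref{defmuu}, we compute:
\[
\mu(\ell_i)(y_i)= t^{ik}\,t^{k}x_{i+1}=y_{i+1},
\]
\[
\mu(\ell_i)(y_{i+1})=t^{(i+1)k}\,t^{-k}x_i=y_i,
\]
\[
\mu(\ell_i)(y_j)=y_j \quad \text{for } j\notin\{i,i+1\}.
\]
So in the basis $\{y_j\}$ each $\mu(\ell_i)$ acts as the permutation matrix of the transposition $(i\ \ i+1)$. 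Equivalently, $D^{-1}\mu'(\ell_i)D = P(\pi(\ell_i))$ for every generator. Both sides are homomorphisms, so
\[
D^{-1}\mu'(g)D = P(\pi(g)) \quad \text{for all } g \in L_n .
\]

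It follows that $\mu'(g)=I_n$ if and only if $P(\pi(g))=I_n$. Since distinct permutations give distinct permutation matrices, $P(\pi(g))=I_n$ holds if and only if $\pi(g)=e$, that is, $g \in PL_n$. Hence $\ker\mu' = PL_n$.

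I expect no serious obstacle. The only step needing care is choosing the exponents of the diagonal conjugation, namely $c_{i+1}=c_i+k$, so that both $y_i \mapsto y_{i+1}$ and $y_{i+1}\mapsto y_i$ hold simultaneously; the calculation above confirms that $c_j = jk$ works. As a consistency check, this also recovers Theorem~\ref{Thmfa}: for $n\le 3$ the projection $\pi$ is injective, since $L_2\cong\mathbb{Z}_2$ and $L_3\cong S_3$, so $PL_n$ is trivial and $\mu'$ is faithful.
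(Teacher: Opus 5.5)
Your proof is correct and is essentially the paper's argument: both write $\mu'$ as a representation of $S_n$ composed with $\pi$ and conclude from the faithfulness of that $S_n$-representation. The difference is that you actually justify this faithfulness, and the well-definedness of the induced $S_n$-representation, by conjugating to permutation matrices with a diagonal matrix (the same trick the paper uses in the proof of Theorem~\ref{Thmir}), whereas the paper only asserts that the image has order $n!$.

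One small slip: with the matrices exactly as displayed in Definition~\ref{propmu}, the identity that holds is $D\,\mu'(\ell_i)\,D^{-1}=P(\pi(\ell_i))$. Indeed, $D^{-1}\mu'(\ell_i)D$ has off-diagonal entries $t^{2k}$ and $t^{-2k}$. Your version is right for the column-convention matrix of $\mu$, which is the transpose of the displayed one, and the slip does not affect the conclusion.
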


\begin{proof}
Recall that the pure triplet group $PL_n$ is defined as the kernel of the epimorphism 
\[
\pi : L_n \longrightarrow S_n, \quad \pi(\ell_i) = (i \ \ i+1), \quad 1\leq i \leq n-1.
\] 
We now define a representation, namely $f$, of $S_n$ into $\mathrm{GL}_n(\mathbb{Z}[t^{\pm 1}])$ by setting 
\[
f(i \ \ i+1) = \mu'(\ell_i), \quad 1\leq i \leq n-1.
\] 
It is evident that the representation $\mu'$ of $L_n$ factors through this representation of $S_n$, that is, 
\[
\mu'(x) = (f \circ \pi)(x) \quad \text{for all } x \in L_n.
\] 
Moreover, one can verify that the images of the generators of $S_n$ under $f$ generate a subgroup of order $n!$ in $\mathrm{GL}_n(\mathbb{Z}[t^{\pm 1}])$. Hence, the representation $f$ is faithful, which establishes the claim.
\end{proof}

We now investigate the irreducibility of the representation $\mu'$.

\begin{theorem} \label{Thmir}
The representation $\mu'$ is reducible for all $n\geq 2$.
\end{theorem}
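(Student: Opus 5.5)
The plan is to exhibit an explicit nonzero proper invariant subspace. The representation $\mu'$ is a monomial representation: each $\mu(\ell_i)$ permutes the lines $\mathbb{Z}[t^{\pm 1}]x_j$, up to the scalars $t^{\pm k}$. For the ordinary permutation representation the sum of the basis vectors is fixed, so here we look for a suitably rescaled sum. Concretely, set
\[
v=\sum_{j=1}^{n} t^{-jk}x_j = t^{-k}x_1+t^{-2k}x_2+\cdots+t^{-nk}x_n ,
\]
and we claim that $\mu(\ell_i)(v)=v$ for every $1\le i\le n-1$.

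The verification is a direct computation using the definition of $\mu$ in Theorem~\ref{defmuu}. For $j\notin\{i,i+1\}$ the term $t^{-jk}x_j$ is fixed. The remaining two terms transform as
\[
t^{-ik}x_i\longmapsto t^{-ik}t^{k}x_{i+1}=t^{-(i-1)k}x_{i+1},
\qquad
t^{-(i+1)k}x_{i+1}\longmapsto t^{-(i+1)k}t^{-k}x_i=t^{-(i+2)k}x_i .
\]
This does not give back $v$, so the exponents must be rechosen. We want coefficients $a_j$ with $a_i t^{k}=a_{i+1}$ and $a_{i+1}t^{-k}=a_i$, which are the same condition. Hence $a_j=t^{jk}$, and we take instead
\[
v=\sum_{j=1}^n t^{jk}x_j .
\]
Then $t^{ik}x_i\mapsto t^{(i+1)k}x_{i+1}$ and $t^{(i+1)k}x_{i+1}\mapsto t^{ik}x_i$, so $\mu(\ell_i)(v)=v$. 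Thus the line spanned by $v$ is invariant under all of $L_n$.

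For $n\ge2$ this line is nonzero and proper in $\mathbb{Z}[t^{\pm1}]^n$, or in $\mathbb{Q}(t)^n$ if one prefers to work over a field. It follows that $\mu'$ is reducible for every $n\ge2$.

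One could also note the complementary invariant subspace $\{\sum a_j t^{-jk}x_j : \sum a_j=0\}$, the analogue of the standard representation. This is not needed for reducibility. The only real care point is getting the scaling exponents right, which is what the computation above settles; the rest is immediate.
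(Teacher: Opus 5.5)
Your argument is correct and is essentially the paper's. The paper conjugates $\mu'$ by $P=\mathrm{diag}(t^{k(n-1)},\ldots,t^{k},1)$ to get the plain permutation matrices and uses $(1,\ldots,1)^T$, so its invariant line is also spanned by a rescaled sum of basis vectors; your uniform formula also removes the need for its separate case $k=0$.

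Two small sign remarks. First, $\sum_j t^{jk}x_j$ is fixed by $\mu$ as defined on the basis in Theorem~\ref{defmuu}. The matrices of Definition~\ref{propmu}, acting on column vectors, correspond to $k\mapsto -k$ and fix $(t^{-k},\ldots,t^{-nk})^T$ instead; this is your first candidate, proportional to $P(1,\ldots,1)^T$. Second, your unneeded complementary subspace should carry the same exponents as your fixed vector, i.e.\ $\{\sum_j a_j t^{jk}x_j : \sum_j a_j=0\}$ for $\mu$.
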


\begin{proof}
We consider two cases for the proof.
\begin{itemize}
    \item [(1)] \underline{The case $k=0$}: In this case, we can directly see that the column vector $(1,1,\ldots, 1)^T$ is invariant under $\mu'(\ell_i)$ for all $1\leq i \leq n-1$, and so $\mu'$ is reducible. \vspace{0.1cm}
 \item [(2)] \underline{The case $k\neq 0$}: We introduce in this case an equivalent representation of $\mu'$, namely $\mu''$, in the following manner. Let 
$$P = \mathrm{diag}\left(t^{k(n-1)}, t^{k(n-2)}, \ldots, t^k, 1\right)$$ be the $n \times n$ diagonal matrix with diagonal entries $t^{k(n-1)}, t^{k(n-2)}, \ldots, t^k,1$. Define a new representation $\mu''$ of $L_n$ by 
$$\mu'' = P^{-1}\mu'P.$$ 
A direct matrix computation shows that the action of $\mu''$ on the generators $\ell_i$, $1 \leq i \leq n-1$, of $L_n$ is given by the following.
$$\mu''(\ell_i)=
\left( \begin{array}{c|@{}c|c@{}}
   \begin{matrix} I_{i-1} \end{matrix} & 0 & 0 \\
   \hline
   0 & \hspace{0.2cm} \begin{matrix} 0 & 1\\ 1 & 0 \end{matrix} & 0 \\
   \hline
   0 & 0 & I_{n-i-1} 
\end{array} \right).$$ Similarly, in this case, the column vector $(1,1,\ldots,1)^T$ is invariant under $\mu''(\ell_i)$ for all $1 \leq i \leq n-1$. Therefore, $\mu''$ is reducible, and consequently, $\mu'$ is also reducible.
\end{itemize}
\end{proof}

\subsection{The Complex Homogeneous $2$-Local Representations of $L_n$}

\begin{definition}\cite{MNasser20241}
Let $G$ be a group generated by $g_1, g_2, \ldots, g_{n-1}$. A representation $\lambda: G \longrightarrow \mathrm{GL}_m(\mathbb{C})$ is said to be \emph{$k$-local} if, for each $1 \leq i \leq n-1$, the image of $g_i$ has the block diagonal form
\[
\lambda(g_i) =
\begin{pmatrix}
I_{i-1} & 0 & 0 \\
0 & M_i & 0 \\
0 & 0 & I_{n-i-1}
\end{pmatrix},
\]
where $M_i \in \mathrm{GL}_k(\mathbb{C})$, $k = m - n + 2$, and $I_r$ denotes the $r \times r$ identity matrix. The representation is called homogeneous if all the matrices $M_i$ are identical.
\end{definition}

The classification of complex homogeneous $2$-local representations of $L_n$ is of particular interest, since the far commutativity relations of the braid group $B_n$ do not hold in $L_n$. In \cite{yMikhalchishina2013}, Y.~Mikhalchishina classified all homogeneous $2$-local representations of $B_n$ for $n \ge 3$, as well as all $2$-local representations of $B_3$. More recently, in \cite{TMayassi2025}, T.~Mayassi and M.~Nasser classified all homogeneous $3$-local representations of $B_n$ for $n \ge 4$. These results motivate an analogous classification for $L_n$, where the absence of far commutativity leads to fundamentally different representation behavior.

\begin{theorem} \label{TH13}
Fix $n \geq 3$, and let $\lambda \colon L_n \longrightarrow \mathrm{GL}_n(\mathbb{C})$ be a non-trivial complex homogeneous $2$-local representation of $L_n$. Then $\lambda$ is given by acting on the generators $\ell_i$, $1 \leq i \leq n-1$, of $L_n$, as described below.
\[
\lambda(\ell_i)=
\begin{pmatrix}
I_{i-1} & 0 & 0 \\
0 &
\begin{pmatrix}
0 & b \\
\dfrac{1}{b} & 0
\end{pmatrix}
& 0 \\
0 & 0 & I_{n-i-1}
\end{pmatrix},
\]
where $b \in \mathbb{C}^*$.
\end{theorem}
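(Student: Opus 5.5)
Write $M=\begin{pmatrix} a & b\\ c & d\end{pmatrix}$ with $ad-bc\neq 0$, so that $\lambda(\ell_i)$ is $I_{i-1}\oplus M\oplus I_{n-i-1}$ for every $i$. The plan is to impose the two defining relations \eqref{eq:L1} and \eqref{eq:L2} of $L_n$ and solve the resulting polynomial equations in $a,b,c,d$. Only generators with $|i-j|=1$ are related, and homogeneity makes all such relations identical. So it suffices to work inside the $3\times 3$ block spanned by positions $i,i+1,i+2$, with
\[
X=\begin{pmatrix} a & b & 0\\ c & d & 0\\ 0&0&1\end{pmatrix},\qquad Y=\begin{pmatrix} 1&0&0\\ 0&a&b\\ 0&c&d\end{pmatrix},
\]
and require $M^2=I_2$ and $XYX=YXY$. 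Since $L_n$ has no far-commutativity relations, no other conditions arise. This is exactly why the answer here differs from Mikhalchishina's classification for $B_n$.

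\textbf{Step 1: the braid relation.} I will compute $XYX$ and $YXY$ entrywise. The $(1,3)$ entries are $b^2$ and $0$, so $b^2=0$. This would make $b=0$, which is incompatible with the stated answer. So the correct comparison must be of the $(1,3)$ entry of $XYX$ with that of $YXY$ after accounting for all terms, and I will recompute carefully rather than trust this shortcut. Concretely: $XY=\begin{pmatrix} a & ab & b^2\\ c& ad & bd\\ 0&c&d\end{pmatrix}$, hence $(XYX)_{13}=b^2$. Also $YX=\begin{pmatrix} a&b&0\\ ac & ad & b\\ c^2 & cd & d\end{pmatrix}$, hence $(YXY)_{13}=0$. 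So the braid relation forces $b=0$, and by symmetry of the $(3,1)$ entries also $c=0$.

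This contradicts the stated form of the answer. I therefore expect the intended convention to be the one suggested by $\mu'$ from Definition~\ref{propmu}: there, $\mu'(\ell_1)\mu'(\ell_2)\mu'(\ell_1)$ has $(1,3)$ entry $t^{2k}$, since the $(1,2)$ entry $t^k$ of one factor multiplies the $(2,3)$ entry of the next. Recomputing with this in mind, $(XYX)_{13}=\sum_{p,q}X_{1p}Y_{pq}X_{q3}$, and this sum is nonzero only for $q=3$, giving $X_{1p}Y_{p3}=ab\cdot$... The honest plan is to redo the product cleanly, without presupposing the outcome, and to read off the system. The key step is to use the off-diagonal entries (the $(1,3)$, $(3,1)$, $(1,2)$, $(2,3)$ entries) to force $a=d=0$ whenever $b,c\neq 0$.

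\textbf{Step 2: the involution relation and non-triviality.} With $a=d=0$, the relation $M^2=I$ gives $bc=1$, so $c=1/b$, which is the stated family. I also have to dispose of the branches with $b=0$ or $c=0$. Then $M$ is triangular and the braid relation reduces to conditions on $a$ and $d$. Combined with $M^2=I$, so that $a,d\in\{\pm1\}$, I expect these branches to force $M=I$, i.e. the trivial representation. Alternatively they may give diagonal solutions such as $M=\mathrm{diag}(1,-1)$, and I must check whether these survive $XYX=YXY$ and whether the authors count them as trivial. Finally I will verify the converse: the specialization of $\mu'$ at $t^k=b$ is a representation by Theorem~\ref{defmuu}, so every $b\in\mathbb{C}^*$ actually occurs.

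The main obstacle is the careful case analysis of the polynomial system, particularly the degenerate branches where $b=0$ or $c=0$. I will organize it by first using the corner entries, then the diagonal entries, and only afterwards imposing $M^2=I$.
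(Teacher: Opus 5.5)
Your reduction matches the paper's: homogeneity and the absence of far-commutativity reduce everything to $M^2=I_2$ and $XYX=YXY$ for the two $3\times 3$ blocks. But there is a genuine gap. The proposal never actually solves the system, and the one computation it performs is wrong.

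The error is in the $(1,3)$ entry:
\[
(YXY)_{13}=(YX)_{11}Y_{13}+(YX)_{12}Y_{23}+(YX)_{13}Y_{33}=0+b\cdot b+0=b^2,
\]
not $0$, because you dropped $Y_{23}=b$. There is no ``convention'' issue; the corner entries agree identically. The correct products are
\[
XYX=\begin{pmatrix} a^2+abc & ab(1+d) & b^2\\ ac(1+d) & bc+ad^2 & bd\\ c^2 & cd & d\end{pmatrix},\qquad
YXY=\begin{pmatrix} a & ab & b^2\\ ac & bc+a^2d & bd(1+a)\\ c^2 & cd(1+a) & bcd+d^2\end{pmatrix}.
\]
Consequently your stated key step is also false. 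The off-diagonal entries give only $abd=acd=0$, i.e.\ $ad=0$ when $b,c\neq 0$, not $a=d=0$. For example, $a=0$ with $d$ arbitrary satisfies every off-diagonal equation. You need the involution relation, through $b(a+d)=0$, to finish. The branches $b=0$ or $c=0$ are left as an ``expectation'' rather than settled. As written, the proposal derives $b=c=0$, which contradicts the theorem, and leaves that contradiction unresolved.

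The missing work is short once the system is right.
\begin{itemize}
\item The diagonal entries give $a(a+bc-1)=0$ and $d(1-bc-d)=0$.
\item From $M^2=I_2$ you have $a^2+bc=1=d^2+bc$. Substituting $bc=1-a^2=1-d^2$ turns the diagonal equations into $a^2(1-a)=0$ and $d^2(d-1)=0$, so $a,d\in\{0,1\}$.
\item Since $a^2=d^2$, this forces $a=d$.
\item If $a=d=1$, then $bc=0$, and $b(a+d)=2b=0$, $c(a+d)=2c=0$ give $b=c=0$. So $M=I_2$, the trivial representation.
\item If $a=d=0$, then $bc=1$, which is exactly the stated family.
\end{itemize}
This disposes of all degenerate branches at once. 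In particular, $\mathrm{diag}(1,-1)$ is excluded because $d=-1\notin\{0,1\}$; it is not a matter of whether it is ``counted as trivial.''

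The paper does the same solving step with a slightly different case split ($a,d\neq 0$, versus $a=0$, versus $d=0$). In either form, this step is the substance of the proof, and your proposal does not contain it.
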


\begin{proof}
Since $\lambda$ is a non-trivial complex homogeneous $2$-local representation of $L_n$, we may write
\[
\lambda(\ell_i)=
\begin{pmatrix}
I_{i-1} & 0 & 0 \\
0 &
\begin{pmatrix}
a & b \\
c & d
\end{pmatrix}
& 0 \\
0 & 0 & I_{n-i-1}
\end{pmatrix},
\]
where $a,b,c,d \in \mathbb{C}$ satisfy $ad-bc \neq 0$, and the inner $2\times 2$ block is not the identity matrix $I_2$. Now, from the structure of homogeneous $2$-local representations of $L_n$, it suffices to consider the following two defining relations of $L_n$, since all other relations yield similar constraints:
\[
\ell_1^2 = 1 \quad \text{and} \quad \ell_1\ell_2\ell_1 = \ell_2\ell_1\ell_2.
\]
Applying these relations to the image under $\lambda$, we obtain the following system of nine equations and four unknowns.
\begin{equation}\label{eqq10}
a^2 + bc - 1 = 0,
\end{equation}
\begin{equation}\label{eqq11}
b(a+d) = 0,
\end{equation}
\begin{equation}\label{eqq12}
c(a+d) = 0,
\end{equation}
\begin{equation}\label{eqq13}
bc + d^2 - 1 = 0,
\end{equation}
\begin{equation}\label{eqq14}
a(-1 + a + bc) = 0,
\end{equation}
\begin{equation}\label{eqq15}
abd = 0,
\end{equation}
\begin{equation}\label{eqq16}
acd = 0,
\end{equation}
\begin{equation}\label{eqq17}
ad(-a + d) = 0,
\end{equation}
\begin{equation}\label{eqq18}
d(1 - bc - d^2) = 0.
\end{equation}

The system naturally divides into three cases:
\begin{itemize}
   \item[$\bullet$] \underline{Case $a \neq 0$ and $d \neq 0$}:  
    Equations \eqref{eqq15} and \eqref{eqq16} force $b = c = 0$. Substituting this into Equation \eqref{eqq10} yields $a^2 = 1$. On the other hand, Equation \eqref{eqq17} implies $a = d$.  Hence $a = d = \pm 1$. The choice $a = d = 1$ corresponds to the trivial representation and is therefore excluded, leaving $a = d = -1$. This value of such $a$ does not represent a solution for Equation \eqref{eqq14}. So, this case does not occur.
    \item[$\bullet$] \underline{Case $a = 0$}:  
    Equation \eqref{eqq10} gives $bc = 1$, and then Equation \eqref{eqq13} yields $d = 0$. This gives our desired result. \vspace{0.1cm}
    \item[$\bullet$] \underline{Case $d = 0$}:  
    Equation \eqref{eqq13} gives $bc = 1$, and then Equation \eqref{eqq10} yields $a = 0$. This again gives our desired result.
\end{itemize}
\end{proof}

The previous theorem implies the following result.

\begin{corollary}
Any non-trivial complex homogeneous $2$-local representation $$\lambda \colon L_n \longrightarrow \mathrm{GL}_n(\mathbb{C})$$ is equivalent to the complex specialization of the representation $$\mu':L_n\longrightarrow \mathrm{GL}_n(\mathbb{Z}[t^{\pm 1}]).$$
\end{corollary}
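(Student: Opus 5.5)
The plan is to combine Theorem~\ref{TH13} with an explicit identification of parameters, and then check that the two representations agree on the generators.

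\textbf{Step 1: read off $\lambda$.} By Theorem~\ref{TH13}, any non-trivial complex homogeneous $2$-local representation $\lambda$ is determined by a single constant $b\in\mathbb{C}^*$. Its generators act by the block
\[
\begin{pmatrix} 0 & b \\ 1/b & 0 \end{pmatrix},
\]
placed in rows and columns $i,i+1$.

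\textbf{Step 2: specialize $\mu'$.} The complex specialization of $\mu'$ substitutes $t\mapsto t_0\in\mathbb{C}^*$. Its generators act by the block
\[
\begin{pmatrix} 0 & t_0^k \\ t_0^{-k} & 0 \end{pmatrix},
\]
in the same positions.

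\textbf{Step 3: match the parameters.} Take $k=1$ (or any $k\neq 0$) and $t_0$ with $t_0^k=b$; such a $t_0$ exists since $\mathbb{C}$ is algebraically closed. Then $\lambda(\ell_i)=\mu'(\ell_i)|_{t=t_0}$ for every $i$. Since both are homomorphisms from $L_n$ agreeing on generators, they coincide, so in particular they are equivalent.

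\textbf{Alternative route via conjugation.} One can avoid choosing $t_0$ by mimicking the proof of Theorem~\ref{Thmir}. Conjugate $\lambda$ by $P=\mathrm{diag}(b^{n-1},b^{n-2},\ldots,b,1)$. This should turn each block into the permutation block $\begin{pmatrix}0&1\\1&0\end{pmatrix}$. The same conjugation takes the case $k=0$ (or $t_0=1$) of $\mu'$ to this form as well. Hence every such $\lambda$ is equivalent to $\mu'$ specialized at any value, and in particular at $t=1$.

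\textbf{Main obstacle.} The only genuine subtlety is interpretive: what "the complex specialization" means (which $k$, which $t_0$). I would state the equivalence via the conjugation route, since it is robust to any choice of $k$ and $t_0$. Verifying it requires one routine check: conjugating the $2\times 2$ block by $\mathrm{diag}(b,1)$ gives the permutation block.
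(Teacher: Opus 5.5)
Your proposal is correct and is essentially the paper's argument: the paper deduces the corollary directly from Theorem~\ref{TH13} by matching its block (with parameter $b$) against the block of $\mu'$ specialized at $t=t_0$, via $t_0^k=b$, exactly as in your Steps 1--3. Your conjugation route is a valid strengthening. Conjugating by $\mathrm{diag}(b^{n-1},\ldots,b,1)$ turns every such $\lambda$ into the permutation representation, and the computation in the proof of Theorem~\ref{Thmir} does the same for every specialization of $\mu'$, so these are all equivalent. One wording fix: for a specialization with $t_0^k\neq 1$ the conjugating matrix is $\mathrm{diag}(t_0^{k(n-1)},\ldots,t_0^k,1)$, not ``the same'' one, and for $k=0$ no conjugation is needed.
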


\begin{remark}
In general, the complex specialization of an arbitrary representation
$\phi : G \longrightarrow \mathrm{GL}_n(\mathbb{Z}[t^{\pm1}])$ does not necessarily preserve
properties such as faithfulness or irreducibility.  
However, by adapting the arguments used in the proofs of
Theorems~\ref{Thmfa} and~\ref{Thmir}, one can obtain analogous conclusions for
every non-trivial complex homogeneous $2$-local representation of $L_n$.
In particular, the representation
\[
\lambda \colon L_n \longrightarrow \mathrm{GL}_n(\mathbb{C})
\]
determined in Theorem \ref{TH13} is faithful for $n=2,3$ and unfaithful for all $n \ge 4$. Moreover, it is
reducible for all $n \ge 3$.
\end{remark}

\subsection{The Complex Non-Homogeneous $2$-Local Representations of $L_3$}

In this subsection, we determine all non-trivial complex non-homogeneous $2$-local representations of the triplet group $L_3=\langle\ell_1,\ell_2\rangle$.

\begin{theorem}\label{Thhh222}
Let $\zeta:L_3\longrightarrow\mathrm{GL}_3(\mathbb{C})$ be a non-trivial $2$-local representation of $L_3$ (not necessarily homogeneous).
Then $\zeta$ is equivalent to exactly one of the following four representations, given by pairs
\[
\zeta(\ell_1)=L_1\ \text{and }\ \zeta(\ell_2)=L_2,
\]
where the matrices $L_1$ and $L_2$ are of the form
$$
L_1=
\begin{pmatrix}
a & b & 0\\
c & d & 0\\
0 & 0 & 1
\end{pmatrix},$$
$$
L_2=
\begin{pmatrix}
1 & 0 & 0\\
0 & e & f\\
0 & g & h
\end{pmatrix},
$$
and belong to one of the following families.

\begin{itemize}
\item[(1)]
\[
L_1^{(1)}=
\begin{pmatrix}
\dfrac{e}{1-e} & b & 0\\[2pt]
\dfrac{1-2 e}{b (e-1)^2} & \dfrac{e}{e-1} & 0\\
0 & 0 & 1
\end{pmatrix} \ \text{ and} \ \
L_2^{(1)}=
\begin{pmatrix}
1 & 0 & 0\\[2pt]
0 & e & f\\[6pt]
0 & \dfrac{1-e^2}{f} & -e
\end{pmatrix},
\]
where $b,e,f\in \mathbb{C}, b\neq 0, e\neq 1, f\neq 0$.
\item[(2)]
\[
L_1^{(2)}=
\begin{pmatrix}
-\tfrac{1}{2} & b & 0\\
\dfrac{3}{4 b} & \dfrac{1}{2} & 0\\
0 & 0 & 1
\end{pmatrix} \ \text{ and} \ \
L_2^{(2)}=
\begin{pmatrix}
1 & 0 & 0\\
0 & -1 & 0\\
0 & g & 1
\end{pmatrix},
\]
where $b,g\in \mathbb{C}$, $b\neq 0$.
\item[(3)]
\[
L_1^{(3)}=
\begin{pmatrix}
1 & 0 & 0\\[2pt]
c & -1 & 0\\
0 & 0 & 1
\end{pmatrix} \ \text{ and} \ \
L_2^{(3)}=
\begin{pmatrix}
1 & 0 & 0\\
0 & \tfrac{1}{2} & f\\
0 & \dfrac{3}{4 f} & -\dfrac{1}{2}
\end{pmatrix},
\]
where $c,f \in \mathbb{C}, f\neq 0.$
\item[(4)]
\[
L_1^{(4)}=
\begin{pmatrix}
1 & 0 & 0\\
0 & -1 & 0\\
0 & 0 & 1
\end{pmatrix} \ \text{ and} \ \
L_2^{(4)}=
\begin{pmatrix}
1 & 0 & 0\\
0 & -1 & 0\\
0 & 0 & 1
\end{pmatrix}.
\]
\end{itemize}
\end{theorem}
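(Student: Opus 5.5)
The plan is a direct computation: impose the two defining relations of $L_3$ on the general pair $L_1, L_2$ and solve the resulting polynomial system by cases. First I will write down the conditions coming from $\ell_1^2=\ell_2^2=1$. The identity $L_1^2=I$ reduces to the $2\times 2$ block $\begin{pmatrix} a&b\\ c&d\end{pmatrix}$ squaring to $I_2$. This gives
\[
a^2+bc=1,\quad b(a+d)=0,\quad c(a+d)=0,\quad d^2+bc=1 .
\]
The same equations hold for $(e,f,g,h)$. So each block is either $\pm I_2$, or has trace zero and determinant $-1$, i.e. $d=-a$ and $bc=1-a^2$ (similarly $h=-e$, $fg=1-e^2$). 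If one of the blocks is $-I_2$ or $I_2$ this is handled separately. The braid relation then forces the other generator to be conjugate to it in $L_3\cong S_3$. Since $L_3\cong S_3$ and $\ell_1,\ell_2$ are conjugate, $\zeta(\ell_1)$ and $\zeta(\ell_2)$ have the same spectrum. This rules out mixed scalar/non-scalar choices, and non-triviality excludes $L_1=L_2=I_3$. The scalar case $-I_2$ for both blocks gives $\mathrm{diag}(-1,-1,1)$ and $\mathrm{diag}(1,-1,-1)$, which fail the braid relation. So both blocks will be reflections of trace zero.

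Next I will expand $L_1L_2L_1=L_2L_1L_2$ entrywise with $d=-a$ and $h=-e$. The $(1,3)$ and $(3,1)$ entries give $bf$-type and $cg$-type products. Comparing entries, I expect conditions such as $b\,f\,(\ldots)=0$ and $c\,g\,(\ldots)=0$, together with a key relation linking $a$ and $e$, namely $ae-a-e=0$-style equations coming from the diagonal entries. The case split is then on the vanishing of $b,c,f,g$:
\begin{itemize}
\item[(i)] $b\neq0$, $f\neq 0$ yields family (1). Solving for $a$ in terms of $e$ gives $a=e/(1-e)$, and then $c=(1-a^2)/b$ simplifies to $(1-2e)/(b(e-1)^2)$.
\item[(ii)] $f=0$ forces $e=\pm1$, and the braid relation should pin $e=-1$, $a=-1/2$, giving family (2).
\item[(iii)] $b=0$ is symmetric and gives family (3).
\item[(iv)] $b=f=0$ leads to the diagonal family (4). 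Here I must recheck the braid relation, and I would double-check whether extra off-diagonal parameters $c,g$ survive or are forced to vanish up to equivalence.
\end{itemize}

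Finally, to justify ``equivalent to exactly one'', I will show that the families are separated by invariants preserved under conjugation within $2$-local form. The natural invariants are the vanishing pattern of $b$ and $f$, and the traces of the corner blocks. Because the theorem lists parameters rather than normal forms, I will interpret equivalence as membership in one of these parametrized families.

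The main obstacle is the case analysis of the braid relation equations. They are nonlinear, and the degenerate branches ($e=1$, $a=\pm1$, vanishing off-diagonal entries) must be tracked carefully so that no solution is lost or doubly counted. I would handle this by computing a Gröbner-style elimination of $c$ and $g$ first, substituting $c=(1-a^2)/b$ when $b\ne0$.
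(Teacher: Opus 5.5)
Your route is the same as the paper's. Both impose $\ell_1^2=\ell_2^2=1$ and $\ell_1\ell_2\ell_1=\ell_2\ell_1\ell_2$ on the general $2$-local pair and solve the resulting polynomial system by cases; the paper only lists the seventeen equations and asserts the solution. Your reduction to two trace-zero reflection blocks is correct, and your cases (i)--(iii) give the right families.

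\textbf{The step that fails is the last one.} The quantities you propose as conjugation invariants are not invariants. The corner-block traces $a+d$ and $e+h$ are $0$ in all four families, and whether $b$ or $f$ vanishes is not preserved by equivalence. With $d=-a$ and $h=-e$ one has $\mathrm{tr}(L_1L_2)=a-ae-e$, and this vanishes on families (1)--(3). So every member of those three families has character $(3,1,0)$ on $(1,\ \ell_1,\ \ell_1\ell_2)$ in $L_3\cong S_3$, i.e.\ is $\mathrm{triv}\oplus\mathrm{std}$. Any two of them are therefore conjugate, and the conjugating matrix carries one $2$-local pair onto another. Family (4) is $\mathrm{triv}\oplus\mathrm{triv}\oplus\mathrm{sgn}$. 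Under the usual meaning of ``equivalent'' there are only two classes, so ``exactly one'' cannot be established by any invariant. What your computation (and the paper's) actually proves is that the solution set is the disjoint union of the four families as sets of matrix pairs; disjointness is read off from which of $b,f$ vanish. Your fallback reading is the only provable one, and the invariant claim should be dropped.

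\textbf{A smaller correction, which also resolves your doubt in case (iv).} The $(1,3)$ and $(3,1)$ entries of the braid relation agree identically ($bf$ and $cg$ on both sides), so they impose nothing. Once $d=-a$, $h=-e$, $bc=1-a^2$ and $fg=1-e^2$ are imposed, every remaining entry is a multiple of $K=a-ae-e$. The off-diagonal entries give $bK=cK=fK=gK=0$. The diagonal entries give $(a-1)K=(a+e)K=(1+e)K=0$. There are then two cases:
\begin{itemize}
\item $K=0$. This forces $e\neq 1$, $a=e/(1-e)$, $bc=(1-2e)/(1-e)^2$ and $fg=1-e^2$. Splitting on whether $b$ and $f$ vanish gives exactly families (1)--(3). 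The subcase $b=f=0$ is impossible, since it would need both $e=\tfrac12$ and $e=-1$.
\item $K\neq 0$. This forces $b=c=f=g=0$, $a=1$ and $e=-1$, i.e.\ family (4). So $c$ and $g$ vanish outright, not merely up to equivalence.
\end{itemize}
Your case (ii) likewise needs $b\neq 0$ to force $K=0$ and hence $a=-\tfrac12$. With $b=f=0$ you land in family (4), where $a=1$.
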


\begin{proof}
Recall that $$L_3=\langle\ell_1,\ell_2\ | \ \ell_1^2=\ell^2_2=1,\ell_1\ell_2\ell_1=\ell_2\ell_1\ell_2\rangle.$$ 
Applying the representation $\zeta$ to the defining relations of $L_3$, we obtain
$$\zeta(\ell_1)^2=\zeta(\ell_2)^2=I_3 \ \text{ and }\ \zeta(\ell_1)\zeta(\ell_2)\zeta(\ell_1)=\zeta(\ell_2)\zeta(\ell_1)\zeta(\ell_2).$$
These identities impose algebraic constraints on the matrix entries of
$\zeta(\ell_1)$ and $\zeta(\ell_2)$, leading to the following system of seventeen equations and eight unknowns.
\begin{equation}
-1+a^2+bc=0,
\end{equation}
\begin{equation}
b(a+d))=0,
\end{equation}
\begin{equation}
c(a+d)=0,
\end{equation}
\begin{equation}
-1 + b c + d^2=0,
\end{equation}
\begin{equation}
-1 + e^2 + f g=0,
\end{equation}
\begin{equation}
f(e+h)=0,
\end{equation}
\begin{equation}
g(e+h)=0,
\end{equation}
\begin{equation}
-1 + f g + h^2=0,
\end{equation}
\begin{equation}
-a + a^2 + b c e=0,
\end{equation} 
\begin{equation}
b(a-e+de)=0,
\end{equation}
\begin{equation}
c(a-e+de)=0,
\end{equation}
\begin{equation}
b c + d^2 e - d e^2 - f g=0,
\end{equation}
\begin{equation}
f(d-de-h)=0,
\end{equation}
\begin{equation}
g(d-de-h)=0,
\end{equation}
\begin{equation}
-d f g + h - h^2=0,
\end{equation}
\begin{equation}
ad-bc\neq 0,
\end{equation}
\begin{equation}
eh-fg\neq 0.
\end{equation}
Solving this system of equations, following the approach of
Theorem~\ref{TH13}, yields the desired result.
\end{proof}

\section{Extensions of Representations of $L_n$ to $VL_n$ and $WL_n$}

This section studies how representations of the triplet group $L_n$ can be extended to $VL_n$ and $WL_n$, focusing on complex $2$-local representations. We classify these extensions and analyze their faithfulness and irreducibility.

\begin{proposition} \label{prop11}
Let $n \geq 3$ and let $\lambda \colon L_n \longrightarrow \mathrm{GL}_n(\mathbb{C})$ be a complex $2$-local representation of $L_n$. Define two maps as follows:
\begin{itemize}
    \item[(1)] $\hat{\lambda}_1 \colon VL_n \longrightarrow \mathrm{GL}_n(\mathbb{C})$ given by
    \[
    \hat{\lambda}_1(\ell_i)=\lambda(\ell_i)
    \quad \text{and} \quad
    \hat{\lambda}_1(\rho_i)=\lambda(\ell_i),
    \qquad 1 \leq i \leq n-1.
    \]
    \item[(2)] $\hat{\lambda}_2 \colon VL_n \longrightarrow \mathrm{GL}_n(\mathbb{C})$ given by
    \[
    \hat{\lambda}_2(\ell_i)=\lambda(\ell_i)
    \quad \text{and} \quad
    \hat{\lambda}_2(\rho_i)=-\lambda(\ell_i),
    \qquad 1 \leq i \leq n-1.
    \]
\end{itemize}
Then both $\hat{\lambda}_1$ and $\hat{\lambda}_2$ are well-defined representations of $VL_n$. Moreover, each of them induces a representation of $WL_n$.
\end{proposition}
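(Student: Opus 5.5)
The plan is to verify directly that the proposed images satisfy every defining relation of $VL_n$, namely \eqref{eq:L1}, \eqref{eq:L2} and \eqref{eqs1v}--\eqref{eqs5v}, and then the extra welded relation \eqref{eqs1W}. Write $A_i=\lambda(\ell_i)$ and $\varepsilon\in\{1,-1\}$, so that $\hat{\lambda}_1$ and $\hat{\lambda}_2$ are the cases $\varepsilon=1$ and $\varepsilon=-1$ of the single assignment $\ell_i\mapsto A_i$, $\rho_i\mapsto \varepsilon A_i$. Treating both maps at once this way means the whole proof reduces to tracking how the scalar $\varepsilon$ enters each relation.

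First I record the two facts about the $A_i$ that will be used. Since $\lambda$ is a representation of $L_n$, the $A_i$ satisfy $A_i^2=I_n$ and $A_iA_{i+1}A_i=A_{i+1}A_iA_{i+1}$. Since $\lambda$ is $2$-local (here $m=n$, so $k=2$), each $A_i$ differs from $I_n$ only in the $2\times 2$ block occupying rows and columns $i,i+1$. Hence for $|i-j|\ge 2$ the matrices $A_i$ and $A_j$ are block diagonal with respect to a common decomposition, with their non-identity blocks in disjoint positions, and therefore $A_iA_j=A_jA_i$. Note that this commutation is a matrix fact and is not claimed in $L_n$ itself.

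Next I check each relation, using that $\varepsilon$ is a central scalar.
\begin{itemize}
\item The relations \eqref{eq:L1} and \eqref{eq:L2} hold because $\lambda$ is a representation.
\item For \eqref{eqs1v}, $(\varepsilon A_i)^2=\varepsilon^2A_i^2=I_n$.
\item For \eqref{eqs2v} and \eqref{eqs4v}, the required commutations reduce, after pulling out $\varepsilon$, to $A_iA_j=A_jA_i$ for $|i-j|\ge2$, which is the locality fact above.
\item For \eqref{eqs3v}, both sides carry the factor $\varepsilon^3$, so the relation reduces to the braid-type relation for the $A_i$.
\item For \eqref{eqs5v}, both sides carry $\varepsilon^2=1$, and the relation becomes $A_iA_{i+1}A_i=A_{i+1}A_iA_{i+1}$, which holds.
\end{itemize}
Thus both $\hat\lambda_1$ and $\hat\lambda_2$ are well-defined homomorphisms on $VL_n$.

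For $WL_n$, it suffices to check that \eqref{eqs1W} holds for the images, since then each map factors through the quotient $VL_n\to WL_n$. Each side of \eqref{eqs1W} contains exactly one $\rho$, so after cancelling $\varepsilon$ the relation becomes $A_iA_{i+1}A_i=A_{i+1}A_iA_{i+1}$, which is true.

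I expect no step to be a genuine obstacle. The only point requiring care is the far-commutation relations, because they fail in $L_n$ and hold for the images purely because of the $2$-local block shape. I would make that block-diagonal argument explicit rather than appeal to the group relations.
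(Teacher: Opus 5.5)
Your proposal is correct and follows the same route as the paper. Relations \eqref{eqs1v}, \eqref{eqs3v}, \eqref{eqs5v} and the welded relation reduce to $\lambda$ being a representation of $L_n$, once $\varepsilon^2=1$ is used or the single $\varepsilon$ is cancelled, while \eqref{eqs2v} and \eqref{eqs4v} follow from the disjoint-block shape of a $2$-local representation; treating $\hat\lambda_1$ and $\hat\lambda_2$ together through the sign $\varepsilon$ only streamlines this, and your explicit block-diagonal argument for the far commutations spells out what the paper states in one line.
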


\begin{proof}
We verify that the two maps $\hat{\lambda}_1$ and $\hat{\lambda}_2$ satisfy the defining relations \eqref{eqs1v}, \eqref{eqs2v}, \ldots, \eqref{eqs5v} of $VL_n$, as well as the defining relation \eqref{eqs1W} of $WL_n$. We have the following facts.
\begin{itemize}
\item[$\bullet$] First, the relations \eqref{eqs1v}, \eqref{eqs3v}, and \eqref{eqs5v} in the presentation of $VL_n$, together with the relation \eqref{eqs1W} defining $WL_n$, are satisfied automatically. Indeed, this follows directly from the fact that
\[
\hat{\lambda}_1(\rho_i)=\lambda(\ell_i)
\quad \text{and} \quad
\hat{\lambda}_2(\rho_i)=-\lambda(\ell_i),
\]
combined with the fact that $\lambda$ is a representation of $L_n$.
\vspace{0.1cm}
\item[$\bullet$] Second, relations \eqref{eqs2v} and \eqref{eqs4v} follow directly from the fact that $\lambda$ is a $2$-local representation of $L_n$.
\end{itemize}
\end{proof}

The previous proposition shows that every $2$-local representation $\lambda$ of $L_n$ can be extended to $VL_n$, and consequently to $WL_n$. Therefore, the existence of such extensions is established. We refer to this type of extension as the \emph{standard type}. Note that the condition that $\lambda$ is a $2$-local representation is essential for this type of extension; otherwise, the relations \eqref{eqs2v} and \eqref{eqs4v} of $VL_n$ would not be satisfied.\vspace{0.1cm}

We now consider the non-trivial complex homogeneous $2$-local representations of $L_n$ described in Theorem \ref{TH13}. Our aim is to extend these representations to $VL_n$ and $WL_n$ and study their properties. In fact, we classify all non-trivial complex homogeneous $2$-local representations of $VL_n$ and $WL_n$ for all $n \geq 3$.

\begin{theorem} \label{Them111}
Fix $n \geq 3$, and let $\omega \colon VL_n \longrightarrow \mathrm{GL}_n(\mathbb{C})$ be a non-trivial complex homogeneous $2$-local representation of $VL_n$. Then $\omega$ is equivalent to one of the following two representations $\omega_j, 1\leq j\leq 2,$ acting on the generators $\ell_i$ and $\rho_i$, $1 \leq i \leq n-1$, of $VL_n$, as described below.
\begin{itemize}
    \item[(1)] $\omega_1 \colon VL_n \longrightarrow \mathrm{GL}_n(\mathbb{C})$ 
\[
\omega_1(\ell_i)=
\begin{pmatrix}
I_{i-1} & 0 & 0 \\
0 &
\begin{pmatrix}
0 & b \\
\dfrac{1}{b} & 0
\end{pmatrix}
& 0 \\
0 & 0 & I_{n-i-1}
\end{pmatrix} \text{ and } \omega_1(\rho_i)=
\begin{pmatrix}
I_{i-1} & 0 & 0 \\
0 &
\begin{pmatrix}
0 & x \\
\dfrac{1}{x} & 0
\end{pmatrix}
& 0 \\
0 & 0 & I_{n-i-1}
\end{pmatrix} ,
\]
where $b,x \in \mathbb{C}^*$.
    \item[(2)] $\omega_2 \colon VL_n \longrightarrow \mathrm{GL}_n(\mathbb{C})$ 
\[
\omega_2(\ell_i)= I_n
 \text{ and } \omega_1(\rho_i)=
\begin{pmatrix}
I_{i-1} & 0 & 0 \\
0 &
\begin{pmatrix}
0 & x \\
\dfrac{1}{x} & 0
\end{pmatrix}
& 0 \\
0 & 0 & I_{n-i-1}
\end{pmatrix} ,
\]
where $x \in \mathbb{C}^*$.
\end{itemize}
Moreover, among the two representations of $VL_n$ above, only $\omega_1$ defines a representation of $WL_n$.
\end{theorem}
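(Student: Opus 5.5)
The plan is to determine the two blocks separately using Theorem~\ref{TH13}, and then to sort out which pairs survive the mixed relations of $VL_n$ and the welded relation. Since $\omega$ is homogeneous $2$-local, we may write $\omega(\ell_i)$ and $\omega(\rho_i)$ as block matrices $\mathrm{diag}(I_{i-1},M,I_{n-i-1})$ and $\mathrm{diag}(I_{i-1},R,I_{n-i-1})$, with fixed $M,R\in\mathrm{GL}_2(\mathbb{C})$.

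\textbf{Step 1: the two blocks separately.} Restricting $\omega$ to the subgroup generated by the $\ell_i$ gives a homogeneous $2$-local representation of $L_n$. By Theorem~\ref{TH13}, either $M=I_2$ or
\[
M=\begin{pmatrix}0&b\\ 1/b&0\end{pmatrix}, \qquad b\in\mathbb{C}^*.
\]
The $\rho_i$ satisfy $\rho_i^2=1$ and $\rho_i\rho_{i+1}\rho_i=\rho_{i+1}\rho_i\rho_{i+1}$. These are exactly the relations used in the proof of Theorem~\ref{TH13}, so the same nine equations apply and give $R=I_2$ or $R=\begin{pmatrix}0&x\\1/x&0\end{pmatrix}$ with $x\in\mathbb{C}^*$. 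The far-commutation relations \eqref{eqs2v} and \eqref{eqs4v} hold automatically, because for $|i-j|\ge 2$ the blocks occupy disjoint index sets; this is the same observation as in Proposition~\ref{prop11}.

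\textbf{Step 2: the mixed relation \eqref{eqs5v}.} This leaves four combinations to test against $\rho_i\rho_{i+1}\ell_i=\ell_{i+1}\rho_i\rho_{i+1}$.
\begin{itemize}
\item[$\bullet$] If $R=I_2$, the relation reduces to $\omega(\ell_i)=\omega(\ell_{i+1})$. This forces $M=I_2$, so $\omega$ is trivial, which is excluded.
\item[$\bullet$] If $M=I_2$ and $R$ is antidiagonal, both sides equal $\omega(\rho_i\rho_{i+1})$. This gives $\omega_2$.
\item[$\bullet$] If both $M$ and $R$ are antidiagonal, all matrices involved are monomial. I would check the relation on the basis vectors $e_i,e_{i+1},e_{i+2}$, using that the block sends $e_i\mapsto \tfrac1b e_{i+1}$ and $e_{i+1}\mapsto b\,e_i$ (and similarly with $x$ for $R$). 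Then I would compare the permutation part and the scalar factors. This gives $\omega_1$ for all $b,x\in\mathbb{C}^*$.
\end{itemize}

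\textbf{Step 3: the welded relation \eqref{eqs1W}.} For $\omega_2$, the relation $\rho_i\ell_{i+1}\ell_i=\ell_{i+1}\ell_i\rho_{i+1}$ becomes $\omega_2(\rho_i)=\omega_2(\rho_{i+1})$, which is false because the blocks sit in different positions. So $\omega_2$ does not factor through $WL_n$. For $\omega_1$ I again evaluate both sides on $e_i,e_{i+1},e_{i+2}$. For example, on $e_i$ both sides give $b^{-2}e_{i+2}$, since $\rho_{i+1}$ fixes $e_i$ and $\rho_i$ fixes $e_{i+2}$. The cases $e_{i+1}$ and $e_{i+2}$ are handled the same way, and all other basis vectors are fixed by both sides.

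\textbf{Main obstacle.} The main obstacle is the bookkeeping of the scalar factors $b^{\pm1}$ and $x^{\pm1}$ in the monomial computations of Steps 2 and 3. These must cancel for arbitrary independent $b$ and $x$; if they do not, an extra constraint such as $x=b$ would appear. I would organise that check by writing each monomial matrix as a permutation matrix times a diagonal matrix, and comparing the permutations and the diagonal parts separately.
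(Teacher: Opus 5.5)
Your proposal is correct and follows essentially the same route as the paper: restrict to $\ell_1^2=1$, $\rho_1^2=1$, the two braid relations and $\rho_1\rho_2\ell_1=\ell_2\rho_1\rho_2$, solve using the argument of Theorem~\ref{TH13}, then test the welded relation; your split into ``$M$ and $R$ are each $I_2$ or antidiagonal, then four cases'' just makes explicit what the paper compresses into ``solved as in Theorem~\ref{TH13}''. The scalar bookkeeping you flag does close up with $b$ and $x$ independent: on $e_i,e_{i+1},e_{i+2}$ both sides of the mixed relation give $(bx)^{-1}e_{i+2}$, $(b/x)\,e_{i+1}$, $x^2e_i$, and both sides of the welded relation give $b^{-2}e_{i+2}$, $(b/x)\,e_{i+1}$, $bx\,e_i$.
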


\begin{proof}
Since $\omega$ is a non-trivial complex homogeneous $2$-local representation of $VL_n$, we may write
\[
\omega(\ell_i)=
\begin{pmatrix}
I_{i-1} & 0 & 0 \\
0 &
\begin{pmatrix}
a & b \\
c & d
\end{pmatrix}
& 0 \\
0 & 0 & I_{n-i-1}
\end{pmatrix} \text{ and }  \omega(\rho_i)=
\begin{pmatrix}
I_{i-1} & 0 & 0 \\
0 &
\begin{pmatrix}
w & x \\
y & z
\end{pmatrix}
& 0 \\
0 & 0 & I_{n-i-1}
\end{pmatrix}
,
\]
where $a,b,c,d,w,x,y,z \in \mathbb{C}$ satisfy $ad-bc \neq 0$, $wz-xy\neq 0$, and the inner $2\times 2$ blocks are not both equal to the identity matrix $I_2$. Now, from the structure of homogeneous $2$-local representations of $VL_n$, it suffices to consider the following defining relations of $VL_n$, since all other relations yield similar constraints:
\[
\ell_1^2 = 1, \ \ell_1\ell_2\ell_1 = \ell_2\ell_1\ell_2, \ \rho_1^2=1, \ \rho_1\rho_2\rho_1=\rho_2\rho_1\rho_2, \text{ and} \ \rho_1\rho_2\ell_1=\ell_2\rho_1\rho_2.
\]
Applying these relations to the image under $\omega$ produces a system of equations, which can be solved as in Theorem \ref{TH13} to obtain the desired result. Furthermore, when imposing the additional relation $\rho_1 \ell_2 \ell_1 = \ell_2 \ell_1 \rho_2$ of $WL_n$, only $\omega_1$ satisfies it, as required.
\end{proof}

The following theorem gives a complete analysis of the faithfulness of the representation $\omega_1$. We focus solely on this representation, as $\omega_2$ is trivial on one of the generators.

\begin{theorem}\label{Thmfa111}
Consider the representation $\omega_1$ determined in Theorem \ref{Them111}. The following holds.
\begin{itemize}
    \item[(1)] For $n=2$, $\omega_1$ is unfaithful if and only if $\dfrac{b}{x}$ is an $m$-th root of unity for some positive integer $m$.
    \item[(2)] $\omega_1$ is unfaithful for all $n\geq 3$.
\end{itemize}
\end{theorem}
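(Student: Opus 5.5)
Part (1) reduces to the structure of $VL_2$. For $n=2$ the only defining relations are $\ell_1^2=1$ and $\rho_1^2=1$, since all the other relations of $VL_n$ involve indices $\ge 2$. Hence $VL_2\cong \mathbb{Z}_2*\mathbb{Z}_2$, which is the infinite dihedral group. Its elements are alternating words in $\ell_1,\rho_1$, and these have the form $(\ell_1\rho_1)^m$ or $(\ell_1\rho_1)^m\ell_1$ with $m\in\mathbb{Z}$.

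I will set $L=\omega_1(\ell_1)=\begin{pmatrix}0&b\\ 1/b&0\end{pmatrix}$ and $R=\omega_1(\rho_1)=\begin{pmatrix}0&x\\ 1/x&0\end{pmatrix}$. A direct computation gives $LR=\mathrm{diag}(b/x,\,x/b)$, so $(LR)^m=\mathrm{diag}\big((b/x)^m,(x/b)^m\big)$. Moreover $(LR)^mL$ is anti-diagonal and therefore never equals $I_2$.

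The kernel is thus generated, as a normal subgroup, by the $(\ell_1\rho_1)^m$ with $(b/x)^m=1$. Each such element is nontrivial in the infinite dihedral group when $m\neq 0$. Consequently $\omega_1$ is unfaithful if and only if $(b/x)^m=1$ for some $m\ge1$. The case $b=x$ corresponds to $m=1$, where $\ell_1\rho_1\ne e$ but its image is $I_2$, so it is included. This proves (1).

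For (2), I want an explicit nontrivial element of the kernel valid for every $n\ge3$, and I propose two candidates. The first exploits far commutativity. The proof of Theorem \ref{Thmfa} already shows that $2$-local images of $\ell_1$ and $\ell_3$ commute, but in $VL_3$ there is no $\ell_3$. So for $n=3$ I would instead use the element $\ell_1\rho_2\ell_1\rho_2$ or similar, and compare its image in $GL_3$ with the situation in $VL_3$.

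The cleaner route is to use the factorization through $S_n$ from the Proposition on the kernel of $\mu'$. After conjugating by the diagonal matrix $P$ used in Theorem \ref{Thmir}, $\omega_1(\ell_i)$ becomes the permutation matrix of $(i\ i+1)$. The image $\omega_1(\rho_i)$ then becomes a monomial matrix whose underlying permutation is also $(i\ i+1)$. Hence $\omega_1(VL_n)$ consists of monomial matrices, and the whole group lies in $(\mathbb{C}^*)^n\rtimes S_n$, whose diagonal part is abelian. Any commutator of two elements in the kernel of the permutation map $VL_n\to S_n$ therefore maps to $I_n$.

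So it suffices to exhibit two non-commuting elements of that kernel. I would take $g=\ell_1\rho_1$ and $h=\ell_2\rho_2$. Both map to the identity permutation, so their images are diagonal and commute, and $[g,h]\in\ker\omega_1$.

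The main obstacle is proving $[\ell_1\rho_1,\ell_2\rho_2]\ne e$ in $VL_n$. My plan is to map $VL_n$ onto a group where this is visible. One option is to send $\rho_i\mapsto1$ and use the Tits representation $\Theta$ of $L_n$. This is legitimate because relation \eqref{eqs5v} becomes $\ell_i=\ell_{i+1}$, which fails, so that map is not well defined. Instead I will send both $\rho_i$ and $\ell_i$ to $\Theta(\ell_i)$, which is a valid map by Proposition \ref{prop11}-type reasoning for non-local maps. That map fails as well, because it would kill $g$.

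I would therefore try the map $\ell_i\mapsto\Theta(\ell_i)$, $\rho_i\mapsto -\Theta(\ell_i)$ and check relations \eqref{eqs2v} and \eqref{eqs4v}; these are likely to fail too. As a fallback, I would verify non-triviality directly for $n=3$ with a normal-form or Reidemeister–Schreier argument in $VL_3$, or by mapping $VL_3$ onto a free product. The case $n\ge3$ then follows, since $VL_3$ embeds in $VL_n$ via the first generators.
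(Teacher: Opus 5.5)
Part (1) is correct and follows the paper's argument. Your description of $VL_2\cong\mathbb{Z}_2*\mathbb{Z}_2$, including the odd-length words $(\ell_1\rho_1)^m\ell_1$, is in fact more complete than the paper's list.

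In part (2) your mechanism is sound and differs from the paper's. All images of $\omega_1$ are monomial matrices whose permutation part is $\pi\colon VL_n\to S_n$, $\ell_i,\rho_i\mapsto(i\ i+1)$. Hence $[\ker\pi,\ker\pi]\subseteq\ker\omega_1$, and in particular $[\ell_1\rho_1,\ell_2\rho_2]\in\ker\omega_1$. The paper instead uses $(\ell_1\rho_2)^3$ for $n=3$ and the far commutation of $\omega_1(\ell_1)$ and $\omega_1(\ell_3)$ for $n\ge4$. Your first candidate $\ell_1\rho_2\ell_1\rho_2$ is not in the kernel, since its image is monomial over a $3$-cycle.

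The gap is exactly the step you call the main obstacle: you never show $[\ell_1\rho_1,\ell_2\rho_2]\neq e$ in $VL_n$, and without it (2) is not proved. None of your attempted detectors works:
\begin{itemize}
\item $\rho_i\mapsto 1$ is not well defined, as you yourself note, so calling it legitimate is self-contradictory.
\item $\rho_i\mapsto\pm\Theta(\ell_i)$ is not well defined for $n\ge4$. Proposition~\ref{prop11} genuinely needs $2$-locality for \eqref{eqs2v}, and $\Theta(\ell_1)$, $\Theta(\ell_3)$ do not commute.
\item Even where these maps exist, they send $\ell_1\rho_1$ and $\ell_2\rho_2$ to $\pm I$, which kills the commutator.
\end{itemize}
The fallbacks (normal forms, Reidemeister--Schreier) are not carried out. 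The claimed embedding $VL_3\hookrightarrow VL_n$ is asserted without justification.

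The missing step can be supplied by a simple homomorphism. Setting $\ell_i\mapsto 1$ and $\rho_i\mapsto s_i=(i\ i+1)$ defines $\phi\colon VL_n\to S_n$. Relations \eqref{eqs4v} and \eqref{eqs5v} become $s_j=s_j$ and $s_is_{i+1}=s_is_{i+1}$. The relations among the $\ell_i$ become trivial, and those among the $\rho_i$ are the Coxeter relations of $S_n$. Then $\phi([\ell_1\rho_1,\ell_2\rho_2])=s_1s_2s_1s_2$, which is a $3$-cycle. So your commutator is a nontrivial element of $\ker\omega_1$ for every $n\ge3$ at once, and no embedding of $VL_3$ into $VL_n$ is needed. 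The same $\phi$ also certifies the paper's $n=3$ witness, since $\phi((\ell_1\rho_2)^3)=s_2$.

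With this added, your route gives a uniform and elementary proof of (2). The paper's $n\ge4$ argument, by contrast, still rests on the unproved assertion that $\ell_1\ell_3\neq\ell_3\ell_1$ in $VL_n$.
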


\begin{proof}
We consider each case separately.
\begin{itemize}
    \item[(1)] \underline{The case $n=2$}: The group \(VL_n\) is generated by the two elements \(\ell_1\) and \(\rho_1\), with defining relations
\[
\ell_1^2=\rho_1^2=1.
\]
Consequently, every element of \(VL_n\) can be written as one of the following:
\[
\ell_1,\quad \rho_1,\quad (\ell_1\rho_1)^m,\quad (\rho_1\ell_1)^m,
\qquad m\in\mathbb{Z}.
\]
Clearly, neither \(\ell_1\) nor \(\rho_1\) belongs to \(\ker(\omega_1)\), since their images under \(\omega_1\) are not the identity matrix. Next, we compute the images of the remaining elements. For \(m\in\mathbb{Z}\), we obtain
\[
\omega_1\big((\ell_1\rho_1)^m\big)
=
\begin{pmatrix}
\left(\dfrac{b}{x}\right)^m & 0 \\
0 & \left(\dfrac{x}{b}\right)^m
\end{pmatrix}
\]
and
\[
\omega_1\big((\rho_1\ell_1)^m\big)
=
\begin{pmatrix}
\left(\dfrac{x}{b}\right)^m & 0 \\
0 & \left(\dfrac{b}{x}\right)^m
\end{pmatrix}.
\]
Therefore, the representation \(\omega_1\) is unfaithful if and only if there exists a nonzero integer \(m\in\mathbb{Z}^*\) such that
\[
\left(\dfrac{b}{x}\right)^m = 1.
\]
\item[(2)] \underline{The case $n \geq 3$}: We divide this case into two subcases:
\begin{itemize}
    \item [$\bullet$] If $n=3$, then direct computations imply that $\omega_1((\ell_1\rho_2)^3)=I_3$ with $(\ell_1\rho_2)^3$ a non-trivial element in $VL_3$.
\item[$\bullet$] If $n\geq 4$, we have for all $1 \leq i,j \leq n-1$ with $|i-j| \geq 2$, a direct computation shows that $\omega_1(\ell_i)\omega_1(\ell_j)=\omega_1(\ell_j)\omega_1(\ell_i).$
However, since the commutative relation $\ell_i\ell_j=\ell_j\ell_i$ does not hold in $VL_n$, it follows that the representation $\omega_1$ is unfaithful in this case.\end{itemize}
\end{itemize}
\end{proof}

\begin{theorem} \label{irrtheee}
For $n\geq 3$, the representation $\omega_1$ is irreducible if and only if $b\neq x$.
\end{theorem}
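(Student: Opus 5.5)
The plan is to conjugate $\omega_1$ into a monomial normal form, then use the commuting diagonal elements $\ell_i\rho_i$ to force any invariant subspace to contain a standard basis vector. Set $q=x/b\in\mathbb{C}^*$, so that $b\neq x$ is equivalent to $q\neq 1$. As in the proof of Theorem~\ref{Thmir}, I would conjugate $\omega_1$ by $P=\mathrm{diag}(b^{n-1},b^{n-2},\ldots,b,1)$. A direct computation should show the following:
\begin{itemize}
\item $P^{-1}\omega_1(\ell_i)P$ is the permutation matrix of the transposition $(i\ \ i+1)$;
\item $P^{-1}\omega_1(\rho_i)P$ sends $e_i\mapsto q^{-1}e_{i+1}$ and $e_{i+1}\mapsto q\,e_i$, and fixes all other $e_j$.
\end{itemize}
Irreducibility is invariant under equivalence, so I would work with this conjugated representation, still writing $\omega_1$.

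For the direction ``irreducible $\Rightarrow b\neq x$'': if $q=1$, every generator acts by a permutation matrix. Then the vector $(1,1,\ldots,1)^T$ spans an invariant line, exactly as in Theorem~\ref{Thmir}, so $\omega_1$ is reducible.

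For the converse, assume $q\neq 1$ and let $h_i=\omega_1(\ell_i\rho_i)$. One computes that $h_i$ is diagonal, with $h_ie_i=q^{-1}e_i$, $h_ie_{i+1}=q\,e_{i+1}$, and $h_ie_k=e_k$ otherwise. The $h_i$ commute and are diagonal, so any invariant subspace $U$ is a direct sum of its intersections with the joint eigenspaces of $\{h_1,\dots,h_{n-1}\}$. The key claim is that for $n\ge 3$ and $q\ne1$ these joint eigenspaces are exactly the lines $\mathbb{C}e_k$; that is, the characters $\chi_k\colon h_i\mapsto(\text{eigenvalue on }e_k)$ are pairwise distinct. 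Take $k<m$:
\begin{itemize}
\item If $m>k+1$, then $h_k$ gives $q^{-1}\neq 1$ on $e_k$ and $1$ on $e_m$.
\item If $m=k+1$ and $q^2\neq1$, then $h_k$ gives $q^{-1}\ne q$.
\item If $m=k+1$ and $q=-1$, I use $n\ge3$: either $h_{k+1}$ or $h_{k-1}$ exists. $h_{k+1}$ acts by $1$ on $e_k$ and by $-1$ on $e_{k+1}$, and $h_{k-1}$ acts by $-1$ on $e_k$ and by $1$ on $e_{k+1}$.
\end{itemize}
This case $q=-1$ is the only delicate point, and it is exactly where the hypothesis $n\ge3$ is needed; for $n=2$ the pair $e_1,e_2$ cannot be separated.

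Now take a nonzero invariant subspace $U$. By the claim, $U$ contains some $e_k$. The matrices $\omega_1(\ell_i)$ act as the transpositions $(i\ \ i+1)$, which generate $S_n$ and act transitively on the basis. Hence $U$ contains every $e_j$ (the analogue of Lemma~\ref{1lemma}), so $U=\mathbb{C}^n$ and $\omega_1$ is irreducible.
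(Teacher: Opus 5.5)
Your proof is correct, and it takes a genuinely different route from the paper's.

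\textbf{Where the two proofs agree.} Both use the same conjugation by $P$. Both use the same all-ones vector to show reducibility when $b=x$.

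\textbf{The paper's route.} For the converse, the paper works vector by vector. From an arbitrary $u\in U$ it forms $\omega_1'(\ell_i)u-u=(u_{i+1}-u_i)(e_i-e_{i+1})$. It rules out $U=\mathbb{C}(1,\ldots,1)^T$ using $\rho$-invariance. It then propagates with $\ell_{j\pm1}$ to put the whole sum-zero hyperplane inside $U$. Applying $\rho_1$ to $e_1-e_2$ extracts $(b/x-x/b)e_2$, which forces $b=\pm x$. Finally it rules out $b=-x$ via $(\omega_1'(\ell_1)-\omega_1'(\rho_1))(e_2-e_3)=2e_1$. In effect, the paper uses that the $\ell$-generators act as the permutation module of $S_n$, and checks that $\rho_1$ preserves neither the all-ones line nor the sum-zero hyperplane.

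\textbf{Your route.} You instead use the abelian subgroup generated by the $\ell_i\rho_i$, whose images are diagonal with entries $q^{\mp1}$ and $1$. You show that the joint eigencharacters separate the coordinate lines. Hence every invariant subspace is spanned by standard basis vectors, and transitivity of the permutation part finishes.

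\textbf{Comparison.} Your argument is the standard irreducibility criterion for monomial representations. It is more conceptual and avoids the paper's preliminary case distinction about the all-ones line. It also isolates exactly where $n\geq 3$ is needed: separating $e_k$ from $e_{k+1}$ when $q=-1$. As a by-product it makes the $n=2$ behaviour transparent: there $\omega_1$ is irreducible if and only if $q^2\neq 1$. The paper's computation is more ad hoc, but it stays entirely elementary and does not need the fact that an invariant subspace of a commuting diagonal family splits along joint eigenspaces.
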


\begin{proof}
 We introduce a representation equivalent to $\omega_1$, denoted by $\omega_1'$, as follows.
Let
\[
P=\mathrm{diag}\big(b^{n-1},\, b^{n-2},\,\ldots,\, b,\,1\big)
\]
be an invertible diagonal matrix. Define
\[
\omega_1'(g)=P^{-1}\omega_1(g)P \qquad \text{for all } g\in VL_n.
\]
Direct computations show that for the generators $\ell_i$ and $\rho_i$, $1\le i\le n-1$, we have
\[
\omega_1'(\ell_i)=
\begin{pmatrix}
I_{i-1} & 0 & 0 \\
0 &
\begin{pmatrix}
0 & 1 \\
1 & 0
\end{pmatrix}
& 0 \\
0 & 0 & I_{n-i-1}
\end{pmatrix} \ \text{ and }\
\omega_1'(\rho_i)=
\begin{pmatrix}
I_{i-1} & 0 & 0 \\
0 &
\begin{pmatrix}
0 & \dfrac{x}{b} \\
\dfrac{b}{x} & 0
\end{pmatrix}
& 0 \\
0 & 0 & I_{n-i-1}
\end{pmatrix},
\]
where $b,x\in\mathbb{C}^*$.

\medskip

\noindent\textbf{Necessary condition.}
Suppose that $b=x$. Then the vector $(1,1,\ldots,1)^T$
is invariant under the action of both $\omega_1'(\ell_i)$ and $\omega_1'(\rho_i)$ for all $1\le i\le n-1$.
Hence $\omega_1'$ admits a nontrivial invariant subspace and is therefore reducible.
Consequently, $\omega_1$ is reducible.

\medskip

\noindent\textbf{Sufficient condition.}
Assume now that $b\neq x$ and suppose, for contradiction, that $\omega_1$ is reducible, then $\omega_1'$ is also reducible. So, there exists a nontrivial subspace $U\subset \mathbb{C}^n$ invariant under $\omega_1'$.
Choose a nonzero vector $u=(u_1,u_2,\ldots,u_n)^T\in U$. For each $1\le i\le n-1$, we have
\[
\omega_1'(\ell_i)u-u=(u_{i+1}-u_i)(e_i-e_{i+1})\in U,
\]
where $e_1,\ldots,e_n$ denotes the standard basis of $\mathbb{C}^n$.
We may choose $u$ so that $u_j\neq u_{j+1}$ for some $1\le j\le n-1$; otherwise $U$ would be spanned by the column vector $(1,1,\ldots,1)^T$, which is impossible since $U$ is invariant under $\omega_1'(\rho_i)$ and $b\neq x$. Thus $e_j-e_{j+1}\in U$ for some $j$.
Applying $\omega_1'(\ell_{j+1})$ and $\omega_1'(\ell_{j-1})$, we obtain
\[
e_{j+1}-e_{j+2}\in U
\quad\text{and}\quad
e_{j-1}-e_j\in U.
\]
Iterating this process yields
\begin{equation}\label{eq:diffs}
e_i-e_{i+1}\in U \qquad \text{for all } 1\le i\le n-1.
\end{equation}
From \eqref{eq:diffs}, no standard basis vector $e_i$ belongs to $U$; otherwise all $e_i$ would lie in $U$, implying $U=\mathbb{C}^n$, a contradiction. Now, consider
\[
\omega_1'(\rho_1)(e_1-e_2)+\frac{x}{b}(e_1-e_2)
=\left(\frac{b}{x}-\frac{x}{b}\right)e_2\in U.
\]
Since $e_2\notin U$, we must have
\[
\frac{b}{x}-\frac{x}{b}=0,
\]
which implies that $b=\pm x$.
Because $b\neq x$, it follows that $b=-x$. In this case, we have
\[
\omega_1'(\rho_i)=
\begin{pmatrix}
I_{i-1} & 0 & 0 \\
0 &
\begin{pmatrix}
0 & -1 \\
-1 & 0
\end{pmatrix}
& 0 \\
0 & 0 & I_{n-i-1}
\end{pmatrix}.
\]
Finally, we compute
\[
\omega_1'(\ell_1)(e_2-e_3)-\omega_1'(\rho_1)(e_2-e_3)
=2e_1\in U,
\]
which implies that $e_1\in U$, contradicting the earlier conclusion that no $e_i$ lies in $U$. Therefore, $\omega_1'$ is irreducible when $b\neq x$, and hence $\omega_1$ is irreducible.

\end{proof}

At the end of this section, a natural question arises.

\begin{question}
Beyond the $2$-local representation of $L_n$, can more general representations of the triplet group $L_n$, like the Tits representation, be extended to $VL_n$ and $WL_n$, and under what conditions would such extensions exist?
\end{question}

\subsection*{Acknowledgement}
The second  author was supported by  United Arab Emirates University under  UPAR grant $\# G00005447$.

\end{document}